\newtheorem{thm}{Theorem}[section]
\newtheorem{cor}{Corollary}[section]
\newtheorem{lm}{Lemma}[section]
\theoremstyle{definition}
\newtheorem{rmk}{Remark}[section]
\numberwithin{equation}{section}
\newcommand{\R}{\mathbb{R}}
\begin{document}
\setcounter{page}{1}

\vspace*{1.0cm}
\title[On the stability of second order parametric (ODE)]
{On the stability of second order parametric ordinary differential equations and applications}
\author[Z. Mazgouri, A. El Ayoubi]{Z. Mazgouri$^{1,*}$, A. El Ayoubi$^2$}
\maketitle
\vspace*{-0.6cm}

\begin{center}
{\footnotesize {\it

$^1$Applied Mathematics Engineering Department, National School of Applied Sciences, Sidi Mohamed Ben Abdellah University, Laboratory LAMA, 30000 Fez, Morocco\\
$^2$Department of Mathematics, Faculty of Sciences, Chouaib Doukkali University, El Jadida Morocco

}}\end{center}

\vskip 4mm {\small\noindent {\bf Abstract.}
This work deals with Lipschitz stability for a parametric version of the general second order Ordinary Differential Equation (ODE) initial-value Cauchy problem. We first establish a Lipschitz stability result for this problem under a partial variation of the data. Then,
we apply our abstract result to second order differential equations governed by cocoercive operators. Furthermore, we discuss more concrete applications of the stability for two specific applied mathematical models inherent in electricity and control theory. Finally, we provide numerical tests based on the software source Scilab, which are done with respect to parametric linear control systems, illustrating henceforth the validity of our abstract theoretical result.

\smallskip

\noindent {\bf Keywords.}
Cauchy problem; Differential equation initial-value problem; Parametric perturbation; Lipschitz stability; Perov inequality. 

\smallskip
\noindent {\bf 2020 Mathematics Subject Classification.}
47D09, 34A12, 34D10, 34D20. }

\renewcommand{\thefootnote}{}
\footnotetext{ $^*$Corresponding author.
\par
E-mail addresses: zakariam511@gmail.com (Z. Mazgouri), abdellatifel2014@gmail.com (A. El Ayoubi).
\par
Received ; Accepted }

\section{Introduction}

%\medskip

Throughout this paper, unless otherwise is specified, the Euclidian  space $\mathbb{R}^{n}$ is equipped with the supremum norm $\|.\|.$
%For a given two points $x_0,\, x_{0}^{\prime} \in \mathbb{R}^{n}$ and a nonnegative real number $r>0,$
We denote by $B(x_0,r)$ the open ball of radius $r>0$ centered at $x_0\in \mathbb{R}^{n}$. Let $T>0$ and $\dot{x}_{0}\in \R^n$ and let $f: [0,T]\times B(x_0,r)\times B(\dot{x}_{0},r)\longrightarrow \mathbb{R}^{n}$ be a given  function. We consider the following second order ordinary differential equation (ODE) initial-value Cauchy problem:
%the second order Cauchy ordinary problem
\begin{equation}\label{5}         S(f,x_{0},\dot{x}_{0})  \begin{cases}
\ddot{x}(t)=f(t,x(t),\dot{x}(t)),\,\, \mbox{for}\,\, a.e \,\, t\in [0,T] \\
x(0)=x_{0}\\
\dot{x}(0)=\dot{x}_{0},
\end{cases}
\end{equation}
where $T$ stands for the final time of the interval of interest and $(x_0,\dot{x}_0)\in \R^{2n}$ is the initial-value condition. \\
%For two points $x_0,\, x_{0}^{\prime} \in \mathbb{R}^{n}$ and $T>0$,  given a function $f: [0,T]\times B(x_0,r)\times B(x_{0}^{\prime},r)\longrightarrow \mathbb{R}^{n},$ and $I=[0,T]$ is the interval of interest,
%This structure of ODE is very common
\smallskip

It is well known that the mathematical formulation of second order ODE plays a central role in modelling  a variety of interesting problems inherent in optimal control systems, mechanics and quantum mechanics, dynamical systems subject to classical mechanical laws,  damped simple harmonic motion, the motion of a particle in one dimensional box besides standard concrete applications to electric circuits, the charge of capacitors in related RLC series and many other applied models of a particular relevance well described in the literature as for example in \cite{ALE,Har,Lurie} and the references quoted there.

\smallskip

For these reasons, the classic ODE problems always attract in the widest sense the attention of large community of scientists not only from mathematics and physics or else economic  disciplines but also in all scientific fields as well as engineers and highly qualified technicians. In fact, the literature on the topic available today presents a plethora of contributions which fundamentally regard the existence theory, asymptotic analysis in time and numerical approximation procedures by the use of several  methods such as fixed point approaches, discretization,  regularization procedures, equilibrium techniques and others, see \cite{Braun, Henner et al,Lambert} and the references therein for a detailed presentation on these elements.

\smallskip

A further interesting research direction for ODE problems regards the study of the impact of parametric perturbations on the corresponding solutions. Such parameters could be of an internal type steaming from endogen properties of the considered systems or of an external/exogen one coming from an independent source, which always exposes trajectories of the systems under consideration to be a subject to variations. In this context, the challenge is to find conditions trough which those perturbations will not affect the objective of the systems under consideration. For such purposes, we refer to the very nice contributions of \cite{A,AAM,A.A.J} on the mathematical management of different perturbations within the dynamic feature of the considered differential equation in the same illuminating stability techniques  of \cite{A-Jipam,ABE,AE-AAO,AMR-optim,AMR-JNCA,AR-JCA,AR}.

\smallskip

Let us underline the interest of the quantitative stability analysis for general dynamic problems. It, actually, allows to compute the distance between the solution of the initial system and the one of its perturbed counterpart, which is doubly  meaningful since on the one hand ensures the qualitative convergence of solutions with respect to parameters at any time, and on the other hand provides Lipschitz estimates of trajectories highly required in auxiliary steps arising in the corresponding algorithms for such evolution problems.
%, i.e., the solution of the perturbed system.
Motivated by these numerous factors, the authors of \cite{ALE}, being inspired by \cite{AMR-optim,AMR-JNCA},  presented  quantitative stability results for first order ODE parametric Cauchy problem, which have been proved to be extendable to second order framework as confirmed in the more recent paper \cite{AAM}, wherein the authors considered second order differential equations governed by maximally monotone operators by using two different approaches. In our opinion, such important stability topics still deserve further attention due to the increasing involvement of different old and/or new perturbations in the systems modelled by ODE problems.

%In order to help readers more fully familiar with the, we also refer to the following very recent investigation on the quantitative stability of second \cite{AAM}. \\

%compute the sharpest error bound\\

\smallskip

Accordingly, this paper could be viewed as a continuation of \cite{AAM} and constitutes by the meantime a natural extension of the results in \cite{ALE}. Thus, being very encouraged by the aforementioned motivations and practical applications, we first investigate parametric stability of the system \eqref{5} under perturbation of the function of the second member of the problem and the initial conditions as well.

\smallskip

The perturbation of our purpose involves an external parameter $\lambda$ that belongs to another space different from the one of the state variable of the system. Precisely, $\lambda$ is localized in some open subset $U$ of a some normed space $(\Lambda,\|\,.\,\|)$. In this way, we denote by $\overline{\lambda}\in \Lambda$, the reference value of the parameter $\lambda$, that is $ f_{\overline{\lambda}}=f, \,\,x_{\overline{\lambda}}=x, \,\,x_{\overline{\lambda}}(0)=x_{0}\,\,\, \mbox{and} \,\,\,\dot{x}_{\overline{\lambda}}(0)=\dot{x}_{0}$, and consider a neighborhood $\mathcal{V}(\overline{\lambda})\subset \Lambda$ of $\overline{\lambda}$. Thus, for each $\lambda\in \mathcal{V}(\overline{\lambda})$, the second order parametric Cauchy problem we consider is stated as follows:
\begin{equation}\label{6}
S(f_{\lambda},x_{0,\lambda},\dot{x}_{0,\lambda})  \begin{cases}
\ddot{x}_{\lambda}(t)=f_{\lambda}(t,x_{\lambda}(t),\dot{x}_{\lambda}(t)),\,\, \mbox{for}\,\, a.e \,\, t\in [0,T] \\
x_{\lambda}(0)=x_{0,\lambda}\\
\dot{x}_{\lambda}(0)=\dot{x}_{0,\lambda},
\end{cases}
\end{equation}
where $f_{\lambda}: [0,T]\times B(x_0,r)\times B(\dot{x}_{0},r)\longrightarrow \mathbb{R}^{n} $ is the perturbed function.

\smallskip

In this setting, we seek the conditions under which, for each $\lambda$ near $\overline{\lambda},$ the trajectory $x_{\lambda}:=x(\lambda)$ is Lipschitz continuous, that is the following Lipschitz type estimate holds: for all $\lambda \in \mathcal{V}(\overline{\lambda})$ and for a.e $t\in [0,T]$:
\begin{eqnarray}\label{0}
\|x_\lambda(t)-x(t)\|\leq c_1(t)\|x_{0,\lambda}-x_0\|+c_2(t)\|\dot{x}_{0,\lambda}-\dot{x}_{0}\|+c_3(t)\|\lambda-\overline{\lambda}\|,
\end{eqnarray}
where $c_1,c_2,c_3$ are positive functions to be determined. \\

Our second concern in this paper is the application of our quantitative stability on the abstract formulation of (ODE) in \eqref{6} to concrete examples quoted above. In this sense, our attention is restricted to control systems and electric circuits though our contribution demonstrates indeed the applicability to a large class of similar or other problems that fit in the second order ODE framework.

\smallskip

In what follows we give an outline of the paper. In Section 2, we present the ingredients and tools we need for establishing our main results. In Section 3, based on direct computations with the use of a Perov inequality, we formulate and prove our principal abstract result on Lipschitz stability for the parametric system \eqref{6} (see, Theorem \ref{main-th}). In Subsection 4.1, we first apply this result to second order dynamical systems governed by cocoercive operators (see, Theorem \ref{thm-dyn-syst}). Afterwards, in subsection 4.2, a further interesting applied model is discussed with respect to differential equations modelling RLC circuit's current (see, Theorem \ref{thm-stab-RLC}). Furthermore, in subsection 4.3, based on our main result of Theorem \ref{main-th}, we investigate sensitivity analysis for second order optimal control systems (see, Theorem \ref{appcont}). Finally, in Section 5, we report a numerical example to support and illustrate the theoretical result given in Theorem \ref{appcont}.

\section{Preliminaries}

Let $T>0$ and $x_0\in \R^n$. For a real-valued function $g: [0,T]\times B(x_0,r)\longrightarrow \mathbb{R}^{n}$, we first consider the associated Cauchy problem of initial-value first order ODE which is expressed as follows:
$$ S(g,x_{0})  \begin{cases}
\dot{x}(t)=g(t,x(t)),\,\, \mbox{for}\,\, a.e \,\, t\in [0,T] \\
x(0)=x_{0}.
\end{cases} $$
Let $U$ be an open subset of $\mathbb{R}^{n}$. We recall that a function $g: [0,T]\times U\longrightarrow \mathbb{R}^{n}$ is said to be $L^{1}$-Carath\'eodory if the following conditions are satisfied:
\begin{enumerate}
	\item [(i)] for each $x \in U,$ the map $t\longmapsto g(t,x)$ is measurable;
	\item [(ii)] for a.e $t\in [0,T]$, the map $x\longmapsto g(t,x)$ is continuous;
	\item [(iii)] there exists a function $m \in L^{1}([0,T],\mathbb{R}^{+})$ such that for all $x\in U$ and for a.e $t\in [0,T]$
	\begin{center}
		$\|g(t,x)\|\leq m(t)$.
	\end{center}
\end{enumerate}

The following two results give existence and uniqueness of the solution for a Cauchy problem of the above form in the Carathéodory sense.

\begin{thm}[\text{\cite[Theorem 1]{F}}]\label{1} Let $T>0,$ $x_0\in\mathbb{R}^{n}$ and let $g : [0,T]\times B(x_0,r)\longrightarrow \mathbb{R}^{n}$ be a $L^{1}$-Carathéodory function. Then, for any real number $d$ such that $0<d\leq T$ and $\int_{0}^{d}m(s)ds\leq r,$ the Cauchy problem $S(g,x_{0})$
	%subject to the initial condition $x(0)=x_0$
	admits a unique solution on $[0,d].$
\end{thm}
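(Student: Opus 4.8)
The plan is to recast the differential problem $S(g,x_0)$ as an equivalent fixed-point problem for an integral operator and then invoke a fixed-point principle. Since $g$ is $L^1$-Carath\'eodory, a function $x$ solves $S(g,x_0)$ on $[0,d]$ in the Carath\'eodory sense if and only if $x$ is continuous (indeed absolutely continuous) and satisfies the integral equation
\begin{equation*}
x(t)=x_0+\int_0^t g(s,x(s))\,ds,\qquad t\in[0,d].
\end{equation*}
I would therefore define the operator $\mathcal F$ by $(\mathcal F x)(t)=x_0+\int_0^t g(s,x(s))\,ds$ on the closed, convex, bounded set $K=\{x\in C([0,d];\mathbb R^n):\ \|x(t)-x_0\|\le r \text{ for all } t\in[0,d]\}$. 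The role of the hypothesis $\int_0^d m(s)\,ds\le r$ is precisely to make $\mathcal F$ map $K$ into itself: for $x\in K$ one has $x(s)\in\overline{B(x_0,r)}$, whence $\|(\mathcal F x)(t)-x_0\|\le\int_0^t\|g(s,x(s))\|\,ds\le\int_0^t m(s)\,ds\le r$. A minor point to check here is that $g$ is only given on the \emph{open} ball; the estimate keeps the trajectory in $\overline{B(x_0,r)}$, and either a strict inequality $\int_0^d m<r$ or continuity of $g$ up to the boundary removes the boundary ambiguity.

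For existence I would apply Schauder's fixed-point theorem to $\mathcal F$ on $K$, which requires three verifications. First, for continuous $x$ the superposition $s\mapsto g(s,x(s))$ is measurable (a consequence of conditions (i)--(ii)) and dominated by $m\in L^1$, so $\mathcal F$ is well defined. Second, $\mathcal F$ is continuous: if $x_n\to x$ uniformly then $g(s,x_n(s))\to g(s,x(s))$ for a.e.\ $s$ by continuity in the second variable, and dominated convergence with majorant $m$ yields $\mathcal F x_n\to\mathcal F x$ uniformly. Third, $\mathcal F(K)$ is equicontinuous, since $\|(\mathcal F x)(t)-(\mathcal F x)(t')\|\le\bigl|\int_{t'}^{t}m(s)\,ds\bigr|$ tends to $0$ as $t\to t'$ uniformly in $x$ by absolute continuity of the integral; together with boundedness this gives relative compactness through the Arzel\`a--Ascoli theorem. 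Schauder then produces a fixed point of $\mathcal F$, i.e.\ a solution of $S(g,x_0)$ on $[0,d]$.

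The delicate part is \emph{uniqueness}, and I expect it to be the main obstacle: the three Carath\'eodory conditions alone guarantee only Peano-type existence, not uniqueness (the solutions of $\dot x=\sqrt{|x|}$ being the standard warning). To secure uniqueness I would exploit the Lipschitz-in-state structure implicitly carried by the cited framework, namely an estimate $\|g(s,x)-g(s,y)\|\le \ell(s)\|x-y\|$ with $\ell\in L^1([0,T])$. Under such a bound the cleanest route is to equip $C([0,d];\mathbb R^n)$ with a Bielecki-type weighted norm $\|x\|_\ast=\sup_{t\in[0,d]}e^{-\Theta\int_0^t\ell(s)\,ds}\|x(t)\|$ with $\Theta>1$; a direct computation shows that $\mathcal F$ becomes a contraction for this norm, so the Banach fixed-point theorem delivers existence and uniqueness simultaneously (subsuming the Schauder argument above). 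Alternatively, keeping the two properties apart, uniqueness follows by subtracting the integral equations for two solutions $x,y$, estimating $\|x(t)-y(t)\|\le\int_0^t\ell(s)\|x(s)-y(s)\|\,ds$, and applying Gronwall's inequality to conclude $x\equiv y$ on $[0,d]$.
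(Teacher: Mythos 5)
The paper does not actually prove this statement --- it is imported verbatim as Theorem~1 of Filippov's paper \cite{F} --- so there is no in-paper argument to compare yours against line by line. Taken on its own terms, your existence argument is the standard and correct one: the equivalence with the integral equation, the invariance of the ball $K$ under $\mathcal F$ (which is exactly what the hypothesis $\int_0^d m(s)\,ds\le r$ is for), continuity of $\mathcal F$ via dominated convergence, compactness via Arzel\`a--Ascoli, and Schauder. The one point worth spelling out rather than waving at is the measurability of $s\mapsto g(s,x(s))$ for continuous $x$: it does follow from conditions (i)--(ii), but via an approximation of $x$ by piecewise constant functions, and it is the only place where the joint Carath\'eodory structure is genuinely used. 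Your remark about the open versus closed ball is a real (if minor) defect of the statement as transcribed, and your fix is the right one.

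On uniqueness you have put your finger on the substantive issue: the three Carath\'eodory conditions alone cannot yield uniqueness ($\dot x=\sqrt{|x|}$, $x(0)=0$ satisfies all of them on any ball), so the word ``unique'' in the statement is not provable from the stated hypotheses. The paper itself tacitly concedes this by quoting Filippov's Theorem~2 separately (Theorem~\ref{2} here), which supplies uniqueness only under the additional integrable Lipschitz condition $\|g(t,x)-g(t,y)\|\le l(t)\|x-y\|$. Your proposed repair --- either a Bielecki-weighted contraction argument or subtracting the two integral equations and applying Gronwall with the $L^1$ coefficient $l$ --- is exactly how that second theorem is proved, and either route is correct. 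In short: your existence proof is sound, your diagnosis that uniqueness requires importing the hypothesis of Theorem~\ref{2} is accurate, and the ``gap'' lies in the statement as quoted rather than in your argument. In the body of the paper this causes no harm, since the main result (Theorem~\ref{main-th}) assumes a genuine Lipschitz condition on $f_\lambda$ in the state variables, under which uniqueness is available.
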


\begin{thm}[\text{\cite[Theorem 2]{F}}]\label{2} Assume that there exists an integrable function $l$ such that
	$$\|g(t,x)-g(t,y)\| \leq l(t)\|x-y \| \quad  \mbox{for all}\,\, (t,x), (t,y) \in [0,T]\times B(x_0,r).$$
	Then, the Cauchy problem $S(g,x_{0})$
	%$x^{\prime}=f(t,x),\,\,\,x(0)=x_0 $
	admits at most one solution on $[0,T]\times B(x_0,r).$
\end{thm}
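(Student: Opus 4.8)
The plan is to establish uniqueness by the classical integral-equation-plus-Gronwall argument. First I would suppose that $x_1$ and $x_2$ are two solutions of $S(g,x_{0})$ defined on $[0,T]$ with values in $B(x_0,r)$. Since any solution in the Carathéodory sense is absolutely continuous and satisfies the differential equation for almost every $t$, each $x_i$ admits the equivalent integral representation obtained by integrating from $0$ to $t$ and using the common initial condition:
\begin{equation*}
x_i(t)=x_0+\int_0^t g(s,x_i(s))\,ds,\qquad i=1,2.
\end{equation*}

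Next I would subtract the two integral identities and take norms, invoking the hypothesized Lipschitz bound on $g$ in its second argument (which applies precisely because both trajectories remain in $B(x_0,r)$). Setting $\phi(t):=\|x_1(t)-x_2(t)\|$, this yields, for all $t\in[0,T]$,
\begin{equation*}
\phi(t)\le\int_0^t\|g(s,x_1(s))-g(s,x_2(s))\|\,ds\le\int_0^t l(s)\,\phi(s)\,ds,
\end{equation*}
together with the initial value $\phi(0)=0$ forced by $x_1(0)=x_2(0)=x_0$.

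Finally, I would apply Gronwall's inequality (equivalently, the scalar form of the Perov inequality alluded to in the introduction) to the nonnegative continuous function $\phi$. As the right-hand side carries no additive constant term, Gronwall gives $\phi(t)\le 0\cdot\exp\!\big(\int_0^t l(s)\,ds\big)=0$, whence $\phi\equiv 0$ and therefore $x_1=x_2$ on $[0,T]$, which is the asserted uniqueness.

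The argument is essentially routine, so I do not anticipate a genuine obstacle; the only two points requiring care are the following. First, the passage to the integral form must be justified for Carathéodory solutions, which rests on the absolute continuity of $x_i$ and the fundamental theorem of calculus in its Lebesgue form. Second, one should confirm that the integral version of Gronwall's inequality remains valid when $l$ is merely integrable rather than continuous; this is standard, since its proof only uses integrability of $l$ together with the continuity of $\phi$, so no additional regularity is needed.
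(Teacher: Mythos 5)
Your argument is correct. Note that the paper itself does not prove this statement: it is imported verbatim as \cite[Theorem 2]{F} (Filippov's uniqueness theorem) and used as a black box, so there is no in-paper proof to compare against. Your route --- pass to the integral representation of the two Carath\'eodory solutions, subtract, apply the Lipschitz bound (legitimate since both trajectories stay in $B(x_0,r)$), and conclude $\phi\equiv 0$ by Gronwall with zero additive constant --- is exactly the standard proof of that cited result, and the two points you flag at the end are handled correctly: absolute continuity gives the Lebesgue fundamental theorem of calculus, and the integral form of Gronwall indeed only needs $l\in L^1$ together with continuity of $\phi$ (one can also see it as the $\alpha=0$, $b\equiv 0$ case of the Perov inequality quoted as Theorem \ref{3}, modulo the continuity hypothesis there, which is why the direct $L^1$-Gronwall is the cleaner reference).
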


%\subsection{The second order Cauchy problem of initial-value ODE}
%Throughout this paper, unless otherwise is specified, the Euclidian  space $\mathbb{R}^{n}$ is equipped with the supremum norm $\|.\|.$
%For a given two points $x_0,\, x_{0}^{\prime} \in \mathbb{R}^{n}$ and a nonnegative real number $r>0,$
%We denote by $B(x_0,r)$ the open ball of radius $r>0$ centered at $x_0\in \mathbb{R}^{n}$. Let $T>0$ and $x_{0}^{\prime}\in \R^n$,

Let us now return to the problem \eqref{5} subject to our treatment in the present contribution.  We make use of the change of variable
$u:= (x, \dot{x})$ and define the corresponding function $\tilde{f}(t,x):=(\dot{x},f(t,x,\dot{x}))$.
%Clearly, the function $x$ is a solution to the problem $S(f,x_{0},x_{0}^{\prime})$ if and only if $(x,x^{\prime})$ is a solution to the following Cauchy problem of initial-value first order ODE:
With this change of variable, by augmenting the dimension of the state variable, the second order
ODE \eqref{5} becomes a first order ODE as follows:
\begin{equation}     S(\tilde{f},u_{0}) \; \begin{cases}
\dot{u}(t)=\tilde{f}(t,u(t)),\,\, \mbox{for}\,\, a.e \,\, t\in [0,T],\,\, u\in \mathbb{R}^{2n} \\
u(0)=u_0.
\end{cases}
\end{equation}
%where $u=(x,y), \tilde{f}(t,u)=(y,f(t,x,y))$ and $u_0=(x_0,x_0^{\prime})$. \\
%\bigskip

%\subsection{Carathéodory conditons and existence}
\noindent Consequently, the next result is straight from Theorem \ref{1}.
\begin{cor}
	Let $f: [0,T]\times B(x_0,r)\times B(\dot{x}_{0},r)\longrightarrow \mathbb{R}^{n}$ be a function that satisfies the assumptions below:
	%the Carathéodory conditions, i.e.,
	\begin{enumerate}
		\item for every $(x,y)\in \mathbb{R}^{2n}$, $f(.,x,y)$ is measurable;
		\item for a.e $t\in [0,T]$, $f(t,.,.)$ is continuous;
		\item for every $r>0$ there exists $g_{r}\in L^{1}([0,T],\mathbb{R}^{+})$ such that whenever $\|x-x_0\|\leq r$ and $\|y-\dot{x}_0\|\leq r$, it holds
		\begin{center}
			$\|f(t,x,y)\|\leq g_{r}(t)$, \, for a.e $t\in [0,T]$.
		\end{center}
	\end{enumerate}
	Then, for any real number $d$ such that $0<d\leq T$ and $\int_{0}^{d}g_r(s)ds\leq r,$ the Cauchy problem \eqref{5}
	%subject to the initial condition $x(0)=x_0$
	admits a unique solution on $[0,d].$
\end{cor}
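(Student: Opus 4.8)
The strategy is to exploit the reduction already set up above: writing $u=(x,\dot x)$ and $\tilde f(t,u)=(y,f(t,x,y))$ for $u=(x,y)$, a function $x$ solves \eqref{5} on $[0,d]$ if and only if $u=(x,\dot x)$ solves the first order system $S(\tilde f,u_0)$ with $u_0=(x_0,\dot x_0)$, and this correspondence is a bijection that preserves both existence and uniqueness. Hence it suffices to check that $\tilde f:[0,T]\times B(u_0,r)\to\mathbb{R}^{2n}$ is $L^1$-Carath\'eodory --- where, since $\mathbb{R}^{2n}$ carries the supremum norm, $B(u_0,r)=B(x_0,r)\times B(\dot x_0,r)$ --- and then to invoke Theorem \ref{1}.

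First I would verify the two regularity conditions. For measurability, fix $u=(x,y)$: the map $t\mapsto\tilde f(t,u)=(y,f(t,x,y))$ has a constant first block and a second block $t\mapsto f(t,x,y)$ that is measurable by hypothesis (1), so $\tilde f(\cdot,u)$ is measurable. For continuity, for a.e.\ $t$ the map $(x,y)\mapsto(y,f(t,x,y))$ is continuous since the projection $(x,y)\mapsto y$ is continuous and $f(t,\cdot,\cdot)$ is continuous by hypothesis (2).

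The only point that needs care is the integrable majorant. For $(x,y)\in B(x_0,r)\times B(\dot x_0,r)$ one has, in the supremum norm,
$$\|\tilde f(t,u)\|=\max\{\|y\|,\|f(t,x,y)\|\}\le\max\{\|\dot x_0\|+r,\,g_r(t)\},$$
using $\|y\|\le\|y-\dot x_0\|+\|\dot x_0\|\le r+\|\dot x_0\|$ together with hypothesis (3). Thus $\tilde m(t):=\max\{\|\dot x_0\|+r,\,g_r(t)\}$ belongs to $L^1([0,T])$, so $\tilde f$ is $L^1$-Carath\'eodory, and Theorem \ref{1} produces a unique solution $u$ on $[0,d]$ whose first component is the sought solution of \eqref{5}. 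I expect this majorant to be the one delicate point: the existence-time bound actually delivered by Theorem \ref{1} is $\int_0^d\tilde m(s)\,ds\le r$, which because of the velocity block is slightly stronger than the stated $\int_0^d g_r(s)\,ds\le r$ (it additionally forces $d(\|\dot x_0\|+r)\le r$); for the clean statement one would either absorb this constant into $g_r$ or shrink $d$ accordingly.
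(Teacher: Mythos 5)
Your proof follows exactly the route the paper intends: the paper gives no written proof of this corollary, merely asserting it is ``straight from Theorem \ref{1}'' after the reduction $u=(x,\dot x)$, $\tilde f(t,u)=(y,f(t,x,y))$, and you have supplied precisely the omitted verification that $\tilde f$ is $L^1$-Carath\'eodory on $[0,T]\times B(u_0,r)$. Your remark about the majorant is a genuine and correct catch of an imprecision in the statement: with the sup norm the honest bound is $\|\tilde f(t,u)\|\le \max\{\|\dot x_0\|+r,\ g_r(t)\}$, so Theorem \ref{1} actually demands $\int_0^d \max\{\|\dot x_0\|+r,\ g_r(s)\}\,ds\le r$, which is strictly stronger than the condition $\int_0^d g_r(s)\,ds\le r$ appearing in the corollary unless one enlarges $g_r$ or shrinks $d$ as you indicate.
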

%$T$ being a nonnegative real number standing for the final time of the interval of interest.
%Then, the corresponding Cauchy problem of initial-value ODE associated with these data is as follows:
%\begin{equation}\label{5}   S(f,x_{0},x_{0}^{\prime})  \; \begin{cases}
%                      x^{\,\prime\prime}(t)=f(t,x(t),x^{\prime}(t)),\,\, \mbox{for}\,\, a.e \,\, t\in [0,T] \\
%                      x(0)=x_{0}\\
%                      x^{\prime}(0)=x_{0}^{\prime}.
%                   \end{cases}
%\end{equation}

We also need the first part of Theorem 1 on page 360 of the book \cite{M.P.F}, which is the statement of Perov's inequality.

\begin{thm}[\text{\cite[Theorem 1]{M.P.F}}]\label{3}
	Let $u(.)$ be a nonnegative function that satisfies the integral inequality
	\begin{equation}\label{}
	u(t)\leq c+\int_{t_0}^{t}(a(s)u(s)+b(s)u^{\alpha}(s))ds,\quad c,\alpha \geq 0,
	\end{equation}
	where $a(t)$ and $b(t)$ are continuous nonnegative functions for $t\geq t_0$.\\
	Then, for $0\leq \alpha <1$ we have
	\begin{equation}\label{}
	u(t)\leq \left\{c^{1-\alpha}e^{(1-\alpha)\int_{t_0}^{t}a(s)ds}+(1-\alpha)\int_{t_0}^{t}b(s)e^{(1-\alpha)\int_{s}^{t}a(r)dr}ds\right\}^\frac{1}{1-\alpha}.
	\end{equation}
\end{thm}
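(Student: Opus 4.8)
The plan is to reduce the nonlinear integral inequality to a \emph{linear} first-order differential inequality through a Bernoulli-type substitution, and then to integrate it explicitly. First I would introduce the right-hand side as an auxiliary function
\[
v(t):=c+\int_{t_0}^{t}\bigl(a(s)u(s)+b(s)u^{\alpha}(s)\bigr)\,ds ,
\]
so that the hypothesis reads $u(t)\le v(t)$ for all $t\ge t_0$, while $v(t_0)=c$ and $v$ is nondecreasing with $v'(t)=a(t)u(t)+b(t)u^{\alpha}(t)$. Since $a,b\ge 0$ and the map $s\mapsto s^{\alpha}$ is nondecreasing on $[0,\infty)$ for $\alpha\ge 0$, the pointwise bound $u\le v$ upgrades to the differential inequality
\[
v'(t)\le a(t)\,v(t)+b(t)\,v^{\alpha}(t) .
\]

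Next I would linearize this Bernoulli-type inequality by setting $w:=v^{1-\alpha}$, so that $w'=(1-\alpha)\,v^{-\alpha}v'$. Dividing the previous inequality by $v^{\alpha}$ (legitimate once $v>0$) and multiplying by $(1-\alpha)>0$ yields the linear differential inequality
\[
w'(t)\le (1-\alpha)\,a(t)\,w(t)+(1-\alpha)\,b(t),\qquad w(t_0)=c^{1-\alpha} .
\]
I would then resolve it using the integrating factor $\mu(t):=\exp\!\bigl(-(1-\alpha)\int_{t_0}^{t}a(r)\,dr\bigr)$: observing that $(\mu w)'\le (1-\alpha)\,\mu\, b$ and integrating from $t_0$ to $t$ gives, after multiplying back by $\mu(t)^{-1}$ and simplifying the exponent via $\int_{t_0}^{t}a-\int_{t_0}^{s}a=\int_{s}^{t}a$,
\[
w(t)\le c^{1-\alpha}e^{(1-\alpha)\int_{t_0}^{t}a(r)\,dr}+(1-\alpha)\int_{t_0}^{t}b(s)\,e^{(1-\alpha)\int_{s}^{t}a(r)\,dr}\,ds .
\]
Finally, since $u\le v$ and $1-\alpha>0$ imply $u^{1-\alpha}\le v^{1-\alpha}=w$, raising the last display to the power $1/(1-\alpha)$ delivers exactly the announced bound.

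The step I expect to be the main obstacle is the degenerate case $c=0$, where the division by $v^{\alpha}$ that produces the variable $w$ is not a priori justified (indeed $v$ could vanish). I would dispose of it by the standard perturbation device: replace $c$ by $c+\varepsilon$ for arbitrary $\varepsilon>0$, so that the auxiliary function satisfies $v\ge c+\varepsilon>0$ throughout and the entire argument applies verbatim, yielding the estimate with $c+\varepsilon$ in place of $c$; letting $\varepsilon\to 0^{+}$ and using the continuity of $\varepsilon\mapsto (c+\varepsilon)^{1-\alpha}$ then recovers the claim for $c=0$. A secondary point to handle with care is regularity: as $a,b$ are continuous and $u$ is (at worst) integrable, $v$ is absolutely continuous, the identity $v'=au+bu^{\alpha}$ holds almost everywhere, and $w=v^{1-\alpha}$ remains absolutely continuous because $s\mapsto s^{1-\alpha}$ is Lipschitz on $[c+\varepsilon,\infty)$, so the integrating-factor computation is fully rigorous.
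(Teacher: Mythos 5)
The paper offers no proof of this statement at all: it is imported verbatim as \cite[Theorem 1]{M.P.F}, so there is nothing internal to compare against. Your argument is correct and is in fact the standard proof of this Willett--Wong/Perov-type inequality: majorize $u$ by the right-hand side $v$, upgrade to the Bernoulli differential inequality $v'\le av+bv^{\alpha}$ using the monotonicity of $s\mapsto s^{\alpha}$, linearize via $w=v^{1-\alpha}$, and integrate with the factor $e^{-(1-\alpha)\int_{t_0}^{t}a}$; your handling of the two delicate points (the case $c=0$ via the $c+\varepsilon$ perturbation, and the absolute continuity of $w$ via the Lipschitz character of $s\mapsto s^{1-\alpha}$ away from $0$) is exactly what is needed to make the computation rigorous. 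The proposal can stand as a self-contained proof of the quoted result.
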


We close this section with the following lemma that will be needed to prove our main result of the next section.
%in Theorem \ref{main-th} below.
\begin{lm}\label{lm1}
	Let $f\in C^{2}([0,T],\mathbb{R}^{n})$
	%$f:[0,T]\longrightarrow \mathbb{R}^{n}$ in
	such that $f(0)=0$.  Then, for each $t\in [0,T]$, we have
	$$ \int_{0}^{t}\|f(s)\|\|\dot{f}(s)\|ds \leq \frac{t}{2}\int_{0}^{t}\|\dot{f}(s)\|^{2}ds.$$
\end{lm}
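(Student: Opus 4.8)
The plan is to reduce everything to the fundamental theorem of calculus together with the Cauchy--Schwarz inequality. Since $f\in C^{2}([0,T],\R^{n})$ with $f(0)=0$, for every $s\in[0,T]$ one has $f(s)=\int_{0}^{s}\dot{f}(\tau)\,d\tau$, so the triangle inequality for the vector-valued integral — valid for any norm, in particular the supremum norm used here — yields the pointwise bound $\|f(s)\|\leq \int_{0}^{s}\|\dot{f}(\tau)\|\,d\tau$. First I would insert this bound into the left-hand side, so that the quantity to estimate becomes $\int_{0}^{t}\bigl(\int_{0}^{s}\|\dot{f}(\tau)\|\,d\tau\bigr)\|\dot{f}(s)\|\,ds$.

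The key observation is that this iterated integral can be evaluated exactly rather than merely estimated. Writing $g:=\|\dot{f}\|$, which is continuous because $f\in C^{2}$, and $G(s):=\int_{0}^{s}g(\tau)\,d\tau$, the integrand equals $G(s)G'(s)=\tfrac12\frac{d}{ds}G(s)^{2}$. Hence, using $G(0)=0$, the integral collapses to $\tfrac12 G(t)^{2}=\tfrac12\bigl(\int_{0}^{t}\|\dot{f}(s)\|\,ds\bigr)^{2}$.

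It then remains to prove $\bigl(\int_{0}^{t}\|\dot{f}(s)\|\,ds\bigr)^{2}\leq t\int_{0}^{t}\|\dot{f}(s)\|^{2}\,ds$, which is exactly the Cauchy--Schwarz inequality applied to the pair $\|\dot{f}\|$ and the constant function $1$ on $[0,t]$, since $\int_{0}^{t}1\,ds=t$. Chaining the three steps gives the claimed inequality. I do not expect a genuine obstacle: the computation is elementary, and the only point deserving a little care is recognizing the perfect-derivative structure $Gg=\tfrac12(G^{2})'$, which is what allows the double integral to be computed in closed form before the final comparison is dispatched by Cauchy--Schwarz.
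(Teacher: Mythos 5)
Your proof is correct and is essentially identical to the paper's: both bound $\|f(s)\|$ by $\int_{0}^{s}\|\dot{f}(\tau)\|\,d\tau$ via the fundamental theorem of calculus, recognize the integrand as the perfect derivative $\tfrac12(G^{2})'$ to get $\tfrac12\bigl(\int_{0}^{t}\|\dot{f}\|\bigr)^{2}$, and finish with Cauchy--Schwarz against the constant function $1$. No differences worth noting.
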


\begin{proof}
Consider the function $g$ defined on $[0,T]$ by
%\begin{center}
	$g(t)=\int_{0}^{t}\|\dot{f}(u)\|du$. \\
%\end{center}
We have for every $t\in [0,T]$, $\dot{g}(t)= \|\dot{f}(t)\|$. Fix $t\in [0,T]$, thus for $s \in [0,t]$, we have  $$\|f(s)\|\|\dot{f}(s)\|\leq g(s)\dot{g}(s).$$
Integrating this inequality in $[0,t]$ and using the Cauchy Schwarz inequality, immediately follows
$$ \int_{0}^{t}\|f(s)\|\|\dot{f}(s)\|ds \leq \int_{0}^{t}g(s)\dot{g}(s)ds = \frac{1}{2}g^{2}(t)\leq \frac{t}{2}\int_{0}^{t}\|\dot{f}(s)\| ^{2}ds.$$
This completes the proof.
\end{proof}

\section{Main result}

In this section, we consider the perturbed format of the system $S(f, x_{0},\dot{x}_{0})$ given by \eqref{6}. Based on a Perov's argument, we provide conditions under which the Lipschitz estimate in \eqref{0} is satisfied.
%which involves an external parameter $\lambda$ that belongs to another space. Precisely, $\lambda$ is localized in some open subset $U$ of a some normed space $(\Lambda,\|\,.\,\|)$. In this way, the second order parametric Cauchy problem we consider is stated as follows:
%\begin{equation}\label{6}
%                  S(f_{\lambda},x_{0,\lambda},x_{0,\lambda}^{\prime})  \begin{cases}
%                     x_{\lambda}^{\,\prime\prime}(t)=f_{\lambda}(t,x_{\lambda}(t),x_{\lambda}^{\prime}(t)),\,\, \mbox{for}\,\, a.e \,\, t\in [0,T] \\
%                     x_{\lambda}(0)=x_{0,\lambda}\\
%                    x_{\lambda}^{\prime}(0)=x_{0,\lambda}^{\prime},
%                 \end{cases}
%\end{equation}
%where $f_{\lambda}: [0,T]\times B(x_0,r)\times B(x_{0}^{\prime},r)\longrightarrow \mathbb{R}^{n} $. The initial value of the parameter $\lambda$ is denoted by $\overline{\lambda}$, i.e., $ f_{\overline{\lambda}}=f, \,\,x_{\overline{\lambda}}=x, \, \,\,x_{\overline{\lambda}}(0)=x_{0}\,\,\, \mbox{and} \,\,\,x^{\prime}_{\overline{\lambda}}(0)=x^{\prime}_{0}.$\\

\medskip

\begin{thm}\label{main-th}
	Assume that for some $L > 0,\,\, L^{\prime} > 0$ the following conditions hold:
	\begin{enumerate}
		\item the function $f_\lambda$ is continuous;
		\item the function $f_\lambda$ is $L$-Lipschitz in $(x,y)$ uniformly in $t$ and $\lambda$, that is
		$$  \|f_{\lambda}(t,x,y)-f_{\lambda}(t,x^{\prime},y^{\prime})\| \leq L(\|x-x^{\prime}\|+\|y-y^{\prime}\|);$$
		\item the function $f_\lambda$ is $L^{\prime}$-Lipschitz in $\lambda$ uniformly in $t$ and $(x,y)$, that is
		$$ \|f_{\lambda}(t,x,y)-f_{\lambda^{\prime}}(t,x,y)\| \leq L^{\prime}\|\lambda-\lambda^{\prime}\|.$$
	\end{enumerate}
	Let $x_\lambda$ $($resp., $x)$ be the solution of the problem $S(f_{\lambda},x_{0,\lambda},\dot{x}_{0,\lambda})$ $($resp., $S(f,x_{0},\dot{x}_{0}))$. Then, for each $\lambda \in \mathcal{V}(\overline{\lambda})$ and for a.e $t\in [0,T]$, we have
	\begin{equation}\label{estip}
	\begin{split}
	&\|x_{\lambda}(t)-x(t)\|\leq  \left(1+L\left(\frac{2}{2+LT}\right)^{2}\left(e^{\frac{(2+LT)t}{2}}-1-t\right)\right)\|x_{0,\lambda}-x_{0}\| \\
	&\quad\quad+\frac{2}{2+LT}\left(e^{\frac{(2+LT)t}{2}}-1\right)\|\dot{x}_{0,\lambda}-\dot{x}_{0}\|+
	L^{\prime}\left(\frac{2}{2+LT}\right)^{2}\left(e^{\frac{(2+LT)t}{2}}-1-t\right)\|\lambda-\overline{\lambda}\|.
	\end{split}
	\end{equation}
\end{thm}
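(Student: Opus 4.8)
The plan is to track the single difference $w:=x_\lambda-x$, to control its acceleration, and then to bootstrap an estimate on $\|\dot w\|$ into one on $\|w\|$. First I would subtract the two Cauchy problems $S(f_\lambda,x_{0,\lambda},\dot x_{0,\lambda})$ and $S(f,x_0,\dot x_0)$ and insert the intermediate term $f_\lambda(t,x,\dot x)$, writing for a.e.\ $t$
\[
\ddot w(t)=\big(f_\lambda(t,x_\lambda,\dot x_\lambda)-f_\lambda(t,x,\dot x)\big)+\big(f_\lambda(t,x,\dot x)-f_{\overline\lambda}(t,x,\dot x)\big).
\]
Hypotheses (2)--(3) then give the pointwise bound $\|\ddot w(t)\|\le L(\|w(t)\|+\|\dot w(t)\|)+L'\|\lambda-\overline\lambda\|$, while the data give $\|w(0)\|=\|x_{0,\lambda}-x_0\|$ and $\|\dot w(0)\|=\|\dot x_{0,\lambda}-\dot x_0\|$. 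For brevity I would write $a=\|x_{0,\lambda}-x_0\|$, $b=\|\dot x_{0,\lambda}-\dot x_0\|$, $\ell=\|\lambda-\overline\lambda\|$ and $k=\tfrac{2+LT}{2}$.

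Next I would pass to the velocity energy. Since $\tfrac{d}{dt}\|\dot w\|\le\|\ddot w\|$ holds a.e.\ for any norm, one has $\tfrac12\tfrac{d}{dt}\|\dot w\|^2\le\|\dot w\|\,\|\ddot w\|$, and integrating over $[0,t]$ together with the bound above yields
\[
\tfrac12\|\dot w(t)\|^2\le\tfrac12 b^2+L\int_0^t\|w\|\,\|\dot w\|\,ds+L\int_0^t\|\dot w\|^2\,ds+L'\ell\int_0^t\|\dot w\|\,ds.
\]
The decisive term is the coupling integral $\int_0^t\|w\|\,\|\dot w\|\,ds$. Inserting $\|w(s)\|\le a+\int_0^s\|\dot w\|$ and then invoking Lemma~\ref{lm1} (directly when $a=0$, and after this splitting in general) bounds it by $a\int_0^t\|\dot w\|+\tfrac t2\int_0^t\|\dot w\|^2$. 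The factor $\tfrac t2\le\tfrac T2$ is precisely what converts the crude coupling into the sharp $\tfrac{LT}{2}$ occurring in the exponent $k$; I expect this to be the main obstacle, since a direct Gr\"onwall argument on $\|\dot w\|$ alone, with its $t$-dependent kernel $t-\tau+1$, cannot recover the averaging factor $\tfrac12$ that Lemma~\ref{lm1} supplies.

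Collecting terms and using $t\le T$ casts the estimate, with $u:=\|\dot w\|^2$, into the form
\[
u(t)\le b^2+A\int_0^t u\,ds+B\int_0^t u^{1/2}\,ds,
\]
which is exactly Perov's inequality (Theorem~\ref{3}) with $\alpha=\tfrac12$ and $c=b^2$; note that the nonhomogeneous data $a,\ell$ enter through the sublinear term $B\int u^{1/2}$, which is what forces $\alpha<1$ rather than plain Gr\"onwall. Applying Theorem~\ref{3} and tracking the constants so that the rate equals $k=\tfrac{2+LT}{2}$ produces a velocity bound of the form $\|\dot w(t)\|\le b\,e^{kt}+(\text{const})\,(e^{kt}-1)$, where the constant is linear in $a$ and $\ell$. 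Finally I would integrate once more via $\|w(t)\|\le a+\int_0^t\|\dot w\|$, using $\int_0^t e^{ks}\,ds=\tfrac1k(e^{kt}-1)$ and $\int_0^t(e^{ks}-1)\,ds=\tfrac1k(e^{kt}-1)-t$; collecting the coefficients of $a$, $b$, $\ell$ then reproduces $c_1,c_2,c_3$ and hence the estimate \eqref{estip}, with the coefficient of $b$ coming out exactly as $\tfrac{2}{2+LT}(e^{kt}-1)$. The only remaining work is the bookkeeping: identifying $A$ and $B$ so that the rate is $k$ and the $a,\ell$ coefficients acquire the stated $\tfrac{1}{k^2}(e^{kt}-1-\cdot)$ form of \eqref{0}.
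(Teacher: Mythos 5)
Your plan follows the paper's proof essentially step for step: the same velocity energy $\phi_{\lambda}(t)=\|\dot{x}_{\lambda}(t)-\dot{x}(t)\|^{2}$, the same splitting of the coupling integral $\int_{0}^{t}\|w\|\,\|\dot{w}\|\,ds$ via $\|w(s)\|\leq a+\int_{0}^{s}\|\dot{w}\|$ followed by Lemma~\ref{lm1}, the same application of Perov's inequality (Theorem~\ref{3}) with $\alpha=\tfrac12$, and the same final integration of the velocity bound; your choice of intermediate term $f_{\lambda}(t,x,\dot{x})$ versus the paper's $f(t,x_{\lambda},\dot{x}_{\lambda})$ is immaterial. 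The one piece of bookkeeping you defer --- verifying that the linear coefficient in the Perov step is $2+LT$ rather than $2L+LT$ --- is precisely the point where the paper's own proof silently replaces $2L\phi_{\lambda}(t)$ by $2\phi_{\lambda}(t)$, so an honest tally of your displayed inequality actually yields the exponent $\tfrac{(2L+LT)t}{2}$, matching the stated constants only when $L\leq 1$.
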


\begin{proof}
Consider the function $ \phi_{\lambda}(t)=\|\dot{x}_{\lambda}(t)-\dot{x}(t)\|^{2}$. We have
$$\dot{\phi}_{\lambda}(t)= 2\langle \dot{x}_{\lambda}(t)-\dot{x}(t),\ddot{x}_{\lambda}(t)-\ddot{x}(t) \rangle.$$
Using Cauchy Schwarz and the triangular inequality we obtain
%$$
% \phi^{\prime}(t)\leq  2\|x_{\lambda}^{\prime}(t)-x^{\prime}(t)\|.\|f(t,x(t),x^{\prime}(t))-f_{\lambda}(t,x_{\lambda}(t),x_{\lambda}^{\prime}(t))\|.
%$$
%Thus,
$$ \dot{\phi}_{\lambda}(t)  \leq 2\|\dot{x}_{\lambda}(t)-\dot{x}(t)\|\left(\|f(t,x(t),\dot{x}(t))-f(t,x_{\lambda}(t),\dot{x}_{\lambda}(t))\|+
\|f(t,x_{\lambda}(t),\dot{x}_{\lambda}(t))-
f_{\lambda}(t,x_{\lambda}(t),\dot{x}_{\lambda}(t))\|\right).  $$
Since $f_\lambda$ is $L$-Lipschitz in $(x,y)$ uniformly in $t$ and $\lambda$ and  $L^{\prime}$-Lipschitz in $\lambda$ uniformly in $t$ and $(x,y)$, we get
\begin{equation*}\label{13}
\dot{\phi}_{\lambda}(t)\leq 2L\phi_{\lambda}(t)+2L\|\dot{x}_{\lambda}(t)-\dot{x}(t)\|\|x_{\lambda}(t)-x(t)\|+2L^{\prime}\|\lambda-\overline{\lambda}\|\|\dot{x}_{\lambda}(t)-\dot{x}(t)\|.
\end{equation*}
On the other hand, we have
\begin{equation}\label{14}
\|x_{\lambda}(t)-x(t)\|\leq \int_{0}^{t}\|\dot{x}_{\lambda}(s)-\dot{x}(s)\|ds+\|x_{\lambda}(0)-x(0)\|.
\end{equation}
By combining the last two inequalities we obtain
\begin{equation}\label{15}
\dot{\phi}_{\lambda}(t)\leq 2\phi_{\lambda}(t)+2L(\phi_{\lambda}(t))^{\frac{1}{2}}\int_{0}^{t}(\phi_{\lambda}(s))^{\frac{1}{2}}ds
+\left(2L\|x_{\lambda}(0)-x(0)\|+2L^{\prime}\|\lambda-\overline{\lambda}\|\right)(\phi_{\lambda}(t))^{\frac{1}{2}}.
\end{equation}
Applying Lemma \ref{lm1}, we obtain
\begin{equation*}\label{16}
\int_{0}^{t}\left( (\phi_{\lambda}(s))^{\frac{1}{2}}\int_{0}^{s}(\phi_{\lambda}(u))^{\frac{1}{2}}du\right)ds \leq \frac{t}{2} \int_{0}^{t} \phi_{\lambda}(s)ds.
\end{equation*}
Integrating the above inequality from $0$ to $t$ and using the inequality \eqref{15} yields
$$ \phi_{\lambda}(t)\leq \phi_{\lambda}(0)+2\int_{0}^{t}\phi_{\lambda}(s)ds+Lt\int_{0}^{t}\phi_{\lambda}(s)ds+\left(2L\|x_{\lambda}(0)-x(0)\|+2L^{\prime}|\lambda-\overline{\lambda}\|\right)\int_{0}^{t}(\phi_{\lambda}(s))^{\frac{1}{2}}ds. $$
Thus, for all $t\in [0,T]$
$$
\phi_{\lambda}(t)\leq \phi_{\lambda}(0)+(2+LT)\int_{0}^{t}\phi_{\lambda}(s)ds+\left(2L\|x_{\lambda}(0)-x(0)\|+2L^{\prime}\|\lambda-\overline{\lambda}\|\right)\int_{0}^{t}(\phi_{\lambda}(s))^{\frac{1}{2}}ds. $$
Applying Theorem \ref{3} with \begin{center}
	$t_0=0,\alpha=\frac{1}{2}, c=\phi_{\lambda}(0), a(.)=2+LT, b(.)=2L\|x_{\lambda}(0)-x(0)\|+2L^{\prime}\|\lambda-\overline{\lambda}\|$ and $u(.)=\phi_{\lambda}(.)$,\end{center}
we see that
$$ \phi_{\lambda}(t) \leq \left\{ \phi_{\lambda}(0)^{\frac{1}{2}}e^{\frac{1}{2}\int_{0}^{t}(2+LT)ds}+\left(L\|x_{\lambda}(0)-x(0)\|+L^{\prime}\|\lambda-\overline{\lambda}\|\right)\int_{0}^{t}e^{\frac{1}{2}\int_{s}^{t}(2+LT)dr}ds\right\}^{2}.
$$
Thus, %since $0\leq t \leq 1$, we have
\begin{equation}\label{xlambda'} \|\dot{x}_{\lambda}(t)-\dot{x}(t)\|\leq  \|\dot{x}_{\lambda}(0)-\dot{x}(0)\|e^{\frac{(2+LT)t}{2}}+2\frac{L\|x_{\lambda}(0)-x(0)\|+L^{\prime}\|\lambda-\overline{\lambda}\|}{2+LT}( e^{\frac{(2+LT)t}{2}}-1).
\end{equation}
Integrating this inequality from $0$ to $t$ and using the inequality (\ref{14}), we obtain
\begin{equation*}
\begin{split}
\|x_{\lambda}(t)-x(t)\|&\leq  \left(1+L\left(\frac{2}{2+LT}\right)^{2}\left(e^{\frac{(2+LT)t}{2}}-1-t\right)\right)\|x_{\lambda}(0)-x(0)\| \\
&\quad+\frac{2}{2+LT}\left(e^{\frac{(2+LT)t}{2}}-1\right)\|\dot{x}_{\lambda}(0)-\dot{x}(0)\|+
L^{\prime}\left(\frac{2}{2+LT}\right)^{2}\left(e^{\frac{(2+LT)t}{2}}-1-t\right)\|\lambda-\overline{\lambda}\|.
\end{split}
\end{equation*}
The proof is hence complete.
\end{proof}

\section{Applications}

\subsection{Second order dynamical systems governed by cocoercive operators}
The object of this subsection is to study the sensitivity analysis for a parametric second-order dynamical system governed by a given maximal monotone operator $A: H\rightarrow H$ defined on a Hilbert space $H$. For $T>0$, the problem under consideration is stated as follows:

\begin{equation}\label{init-syst}
\tag*{$S(A,u_0,v_0)$}
\begin{cases}
\;\ddot{x}(t)+\gamma \dot{x}(t)+Ax(t)=0,\;\;\mbox{for}\,\, a.e \,\, t\in [0,T] \\
\;x(0)=u_0 \hbox{ and }\;\dot{x}(0)=v_0,
\end{cases}
\end{equation}
where $\gamma>0$ is a damping coefficient and $u_0, v_0\in H$ are initial conditions. \\
We consider here the notion of strong solution of \ref{init-syst}, that is  $x:[0, T]\rightarrow H$ such that $x$ and $\dot{x}$ are absolutely continuous on the interval $[0,T]$, $x(0)=u_0$, $\dot{x}(0)=v_0$ and $\ddot{x}(t)+\gamma \dot{x}(t)+Ax(t)=0$ for almost every $t\in [0, T]$. For interesting particular cases which motivate the above dynamical system, we refer to \cite{AAM, Alv-Att-1, Alv-Att-2} and the references therein.
% Since the $\alpha$-cocoercivity of $A$ obviously implies that $A$ is $\frac{1}{\alpha}$-Lipschitz continuous, the existence and uniqueness of a strong solution $x\in C^2([0, +\infty);H)$ of \ref{init-syst} can be shown via the classical Cauchy-Lipschitz-Picard theorem (see, \cite{Haraux}) by rewriting \ref{init-syst} as a first order dynamical system in the product space $H\times H$. \\
In our concern, under perturbation of both the operator $A$ and the initial values, the parametric form of the above system is stated as follows:

\begin{equation}\label{coco-syst}
\tag*{$S(A_{\lambda},u_{0,\lambda},v_{0,\lambda})$} \quad \begin{cases}
\ddot{x}_{\lambda}(t)+\gamma  \dot{x}_{\lambda}(t)+A_{\lambda}x_{\lambda}(t)=0,\;\;\mbox{for}\,\, a.e \,\, t\in [0,T] \\
x_{\lambda}(0)=u_{0,\lambda}\,\,\, \mbox{and}\,\,\, \dot{x}_{\lambda}(0)=v_{0,\lambda}.
\end{cases}
\end{equation}

\noindent In our sensitivity analysis of \ref{coco-syst}, the following two hypotheses will be considered:
\begin{itemize}
	\item[$(C_1)$] {\bf{Cocoercivity assumption}}: for each $\lambda \in \mathcal{V}(\overline{\lambda}),$ there exists $\alpha_{\lambda}>0$ such that $\forall u,v \in H$
	$$ \langle A_{\lambda}(u)-A_{\lambda}(v),u-v \rangle \geq \alpha_{\lambda}\|A_{\lambda}(u)-A_{\lambda}(v)\|^{2};$$
	\item[$(C_2)$] {\bf{Lipschitz property with respect to parameter $\lambda$}}: there exists $L^{\prime} >0$ such that for all $u \in H$ and all $\lambda,\lambda^{\prime} \in \mathcal{V}(\overline{\lambda}),$
	$$\|A_{\lambda}(u)-A_{\lambda^{\prime}}(u)\|\leq L^{\prime}\|\lambda-\lambda^{\prime}\|.$$
\end{itemize}

\begin{thm}\label{thm-dyn-syst}
	Assume that conditions $(C_1)-(C_2)$ are satisfied. Then, for each $\lambda \in \mathcal{V}(\overline{\lambda})$, the problem \ref{coco-syst} admits a unique solution $x_{\lambda}.$ Furthermore, if $\frac{1}{\alpha_{\lambda}}< \gamma $ for all $\lambda \in \mathcal{V}(\overline{\lambda})$, then for a.e $t\in [0,T]$, the following Lipschitz estimate holds:
	\begin{equation}\label{estimate-sods}
	\begin{split}
	&\|x_{\lambda}(t)-x(t)\|\leq  \left(1+\gamma\left(\frac{2}{2+\gamma T}\right)^{2}\left(e^{\frac{(2+\gamma T)t}{2}}-1-t\right)\right)\|u_{0,\lambda}-u_{0}\| \\
	&\quad+\frac{2}{2+\gamma T}\left(e^{\frac{(2+\gamma T)t}{2}}-1\right)\|v_{0,\lambda}-v_{0}\|+
	L^{\prime}\left(\frac{2}{2+\gamma T}\right)^{2}\left(e^{\frac{(2+\gamma T)t}{2}}-1-t\right)\|\lambda-\overline{\lambda}\|.
	\end{split}
	\end{equation}
\end{thm}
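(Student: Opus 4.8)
The plan is to recast the second-order system \ref{coco-syst} into the abstract framework of Theorem \ref{main-th} and invoke that theorem directly. To this end I would rewrite the equation as $\ddot{x}_\lambda(t) = -\gamma\dot{x}_\lambda(t) - A_\lambda(x_\lambda(t))$ and introduce the vector field
\[
f_\lambda(t,x,y) := -\gamma y - A_\lambda(x),
\]
so that \ref{coco-syst} takes exactly the form $S(f_\lambda, u_{0,\lambda}, v_{0,\lambda})$ with $x_{0,\lambda} = u_{0,\lambda}$ and $\dot{x}_{0,\lambda} = v_{0,\lambda}$. The whole argument then reduces to checking that $f_\lambda$ satisfies the three hypotheses of Theorem \ref{main-th} with Lipschitz constant $\gamma$ in $(x,y)$ and $L'$ in $\lambda$, after which \eqref{estimate-sods} is nothing but \eqref{estip} with $L$ replaced by $\gamma$.

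The first step is to extract a Lipschitz bound on $A_\lambda$ from the cocoercivity assumption $(C_1)$. Combining $(C_1)$ with the Cauchy--Schwarz inequality gives
\[
\alpha_\lambda \|A_\lambda(u) - A_\lambda(v)\|^2 \leq \langle A_\lambda(u) - A_\lambda(v), u - v\rangle \leq \|A_\lambda(u) - A_\lambda(v)\|\,\|u - v\|,
\]
whence $\|A_\lambda(u) - A_\lambda(v)\| \leq \frac{1}{\alpha_\lambda}\|u - v\|$; that is, $A_\lambda$ is globally $\frac{1}{\alpha_\lambda}$-Lipschitz. Since $A_\lambda$ is thus continuous and bounded on bounded sets, $f_\lambda$ is a globally Lipschitz (hence $L^1$-Carath\'eodory) vector field in the augmented variable $(x,y)$, and the Cauchy--Lipschitz theorem---equivalently, the corollary following Theorem \ref{1} together with the uniqueness Theorem \ref{2}---yields a unique strong solution $x_\lambda$ on all of $[0,T]$.

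For the quantitative estimate I would verify the three hypotheses. Continuity of $f_\lambda$ is immediate from the continuity of $A_\lambda$ and the linearity of $y \mapsto -\gamma y$. For the Lipschitz property in $(x,y)$ the triangle inequality gives
\[
\|f_\lambda(t,x,y) - f_\lambda(t,x',y')\| \leq \gamma\|y - y'\| + \|A_\lambda(x) - A_\lambda(x')\| \leq \gamma\|y - y'\| + \tfrac{1}{\alpha_\lambda}\|x - x'\|.
\]
This is exactly where the hypothesis $\frac{1}{\alpha_\lambda} < \gamma$ enters: it lets me bound $\frac{1}{\alpha_\lambda} \leq \gamma$ and conclude $\|f_\lambda(t,x,y) - f_\lambda(t,x',y')\| \leq \gamma(\|x - x'\| + \|y - y'\|)$ uniformly in $t$ and $\lambda$, so that hypothesis (2) of Theorem \ref{main-th} holds with $L = \gamma$. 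Finally, $(C_2)$ gives $\|f_\lambda(t,x,y) - f_{\lambda'}(t,x,y)\| = \|A_\lambda(x) - A_{\lambda'}(x)\| \leq L'\|\lambda - \lambda'\|$, which is hypothesis (3). Applying Theorem \ref{main-th} with $L = \gamma$ then produces \eqref{estimate-sods} verbatim.

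The point requiring the most care is the uniformity of the Lipschitz constant over $\mathcal{V}(\overline{\lambda})$. The cocoercivity modulus $\alpha_\lambda$ is permitted to depend on $\lambda$, so the natural Lipschitz constant $\frac{1}{\alpha_\lambda}$ of $A_\lambda$ varies with $\lambda$; without the assumption $\frac{1}{\alpha_\lambda} < \gamma$ one could not collapse the bound $\max\{\gamma, \frac{1}{\alpha_\lambda}\}$ into the single constant $\gamma$ that appears in \eqref{estimate-sods}. A secondary item to flag is that Theorem \ref{main-th} was stated over $\mathbb{R}^n$ while \ref{coco-syst} lives in a general Hilbert space $H$; since its proof uses only the inner-product structure---Cauchy--Schwarz, the triangle inequality and the integral inequalities of Lemma \ref{lm1} and Theorem \ref{3}---it transfers to $H$ unchanged, and this is the only transcription I would note explicitly.
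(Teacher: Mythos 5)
Your proposal is correct and follows essentially the same route as the paper: recast \ref{coco-syst} with $f_\lambda(t,x,y)=-A_\lambda x-\gamma y$, deduce $\frac{1}{\alpha_\lambda}$-Lipschitz continuity of $A_\lambda$ from cocoercivity, use $\frac{1}{\alpha_\lambda}<\gamma$ to get the uniform constant $L=\gamma$, check $(C_2)$ for the $\lambda$-Lipschitz hypothesis, and apply Theorem \ref{main-th}. Your explicit Cauchy--Schwarz derivation of the Lipschitz bound and your remark on transferring Theorem \ref{main-th} from $\mathbb{R}^n$ to $H$ are details the paper leaves implicit, but they do not change the argument.
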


\begin{proof}
Clearly, the formulation in the problem \ref{coco-syst} may be expressed as \eqref{5} by considering the function $f_{\lambda}$ defined, for all $\lambda \in \mathcal{V}(\overline{\lambda})$ and all $x,y\in H$, by $f_{\lambda}(t,x,y)=-A_{\lambda}x-\gamma y$.
Owing to the fact that the $\alpha_{\lambda}$-cocoercivity of $A_{\lambda}$ in $(C_1)$ implies that $A_{\lambda}$ is $\frac{1}{\alpha_{\lambda}}$-Lipschitz continuous, the existence and uniqueness of a strong solution to \ref{coco-syst} can be shown via the classical Cauchy-Lipschitz-Picard theorem (see, \cite{Haraux}) by rewriting this system as a first order dynamical system in the product space $H\times H$.

Now, for proving the above estimate, we will make use of Theorem \ref{main-th}. Combining the $\frac{1}{\alpha_{\lambda}}$-Lipschitz continuity of $A_{\lambda}$ with the hypothesis $\frac{1}{\alpha_{\lambda}}< \gamma$ for all $\lambda \in \mathcal{V}(\overline{\lambda})$, one can conclude that the function $f_{\lambda}$ is $\gamma$-Lipschitz in $(x,y)$ uniformly in $t$ and $\lambda$.
On the other hand, it is easy to check that the condition $(C_2)$ implies that $f_\lambda$ is $L^{\prime}$-Lipschitz in $\lambda$ uniformly in $t$ and $(x,y).$ Consequently, assumptions of Theorem \ref{main-th} are all satisfied. Applying this theorem, we immediately obtain  the required estimate.
\end{proof}

\begin{rmk}
	\begin{enumerate}
		\item For some instances for which the assumptions $(C_1)$ and $(C_2)$ are verified, we refer to \cite[Example 3.3]{AAM}.
		\item If the initial conditions are not subject to perturbation, (i.e., if $(u_{0,\lambda},v_{0,\lambda})=(u_0,v_0)$ for all $\lambda \in \mathcal{V}(\overline{\lambda})$), then the
		estimation of Theorem \ref{thm-dyn-syst} reduces to: for all $\lambda \in
		\mathcal{V}(\overline{\lambda})$ and for a.e. $t\in [0,T]$,	
		$$\|x_{\lambda}(t)-x(t)\|\leq 
		L^{\prime}\left(\frac{2}{2+\gamma T}\right)^{2}\left(e^{\frac{(2+\gamma T)t}{2}}-1-t\right)\|\lambda-\overline{\lambda}\|.$$
		\item In \cite[Theorem 3.1]{AAM}, based on a Gronwall-Bellman argument, the authors obtained an estimate similar to the one of our Theorem \ref{thm-dyn-syst} with same hypotheses by reformulating the second order dynamical system \ref{coco-syst} to a first order one. While in our case here, thanks to the Perov's inequality, the estimate \eqref{estimate-sods} is rather obtained by a direct computation without recourse to the first order case. 
	\end{enumerate}
\end{rmk}

\subsection{Differential equation of RLC circuit's current}

In this paragraph, we apply our main result on Lipschitz stability of the previous section to a concrete problem mathematically modelled as an initial-value problem for second order ODE, which interests in particular the audience from physics and engineering. In this respect, we consider the following equation modelling the electric current in an RLC parallel circuit, also known as a tuning circuit (see \cite{Har}):
%In \cite[Theorem 4.1]{I-R},
%the authors prove that under certain conditions
\begin{equation}\label{ex1}
\begin{cases}
\frac{d^{2}x}{dt^{2}}+\frac{R}{L}\frac{dx}{dt}=g(t,x(t)),\,\,\, t\in [0,1] \\
x(0)=\dot{x}(0)=0,
\end{cases}
\end{equation}
where $g:[0,1]\times \mathbb{R}^{+}\longrightarrow \mathbb{R}$ is a continuous function, $R$ is the resistance of the resistor and $L$ is the inductance of the inductor.
%$\tau=\frac{R}{L}$ is a constant characterising the system. Here, $R$ is the resistance of the resistor and $L$ is the inductance of the inductor.
%%%%%%%%%%%%%%%
\begin{flushleft}
	\begin{figure}[h]
		\vspace{-0.6cm}
		\begin{center}
			\includegraphics[width=5cm]{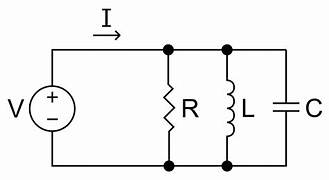}
			\vspace{-0.2cm}
			\caption{\small RLC parallel circuit}
			\label{RLC}
		\end{center}
	\end{figure}
\end{flushleft}
Clearly, the problem (\ref{ex1}) is equivalent to the following integral equation
\begin{equation}\label{ex2}
x(t)=\int_{0}^{1}G(t,s)g(s,x(s))ds,\,\, t\in [0,1],
\end{equation}
where $G$ is the Green's function defined by
\begin{equation}\label{ex3}
G(t,s)=\begin{cases}
\frac{1}{\tau}(1-e^{\tau(s-t)})   &  if \quad 0\leq s\leq t\leq 1 \\
0 \qquad \qquad  &  if  \quad   0\leq t\leq s \leq 1,
\end{cases}
\end{equation}
and $\tau= \frac{R}{L}$. 
Following the same approach used in the proof of \cite[Theorem 4.1]{I-R},
%we prove by correcting the Green's function proposed by the authors and replacing it by
%\eqref{ex3}, that
the problem (\ref{ex2}) admits a unique solution under the following
%new
hypotheses:
\begin{itemize}
	\item there exists a continuous function $w:[0,1]\longrightarrow (0,\infty)$   such that
	$$ |g(s,x(s))| \leq  \frac{\tau}{2}w(s) |x(s)|, $$ for all $s\in [0,1]$;
	\item $\max_{s\in [0,1]}w(s)=e^{-\alpha}$, where $\alpha>e$.
\end{itemize}
In our concern, we consider that the function $g$ is also subject to perturbation, the perturbed circuit is modelled by the following equation:
\begin{equation}\label{pbb}
\begin{cases}
\frac{d^{2}x_\lambda}{dt^{2}}+\frac{R}{L}\frac{dx_\lambda}{dt}=g_{\lambda}(t,x_\lambda(t)),\,\, \mbox{for}\,\, a.e \,\, t\in [0,1] \\
x_{\lambda}(0)=x_{0,\lambda}\\
\dot{x}_{\lambda}(0)=\dot{x}_{0,\lambda}.
\end{cases}
\end{equation}
The initial value of the parameter $\lambda$ is denoted by $\overline{\lambda}$, that is, $g_{\overline{\lambda}}=g, x_{\overline{\lambda}}=x, x_{\overline{\lambda}}(0)=x_{0}\, \mbox{and} \,\dot{x}_{\overline{\lambda}}(0)=\dot{x}_{0}.$

\medskip
%The next result can be seen as a refinement of \cite[Theorem 4.1]{I-R} since the second member of the hypothesis does not depend on $\tau$, consequently,
As a direct application of Theorem \ref{main-th}, we obtain the following parametric stability result for the system \eqref{pbb}.
\begin{thm}\label{thm-stab-RLC}
	Suppose that the following assumptions are satisfied:
	\begin{enumerate}
		\item [(i)] the function $g_\lambda$ is continuous;
		\item [(ii)] for some $\beta>0$, the function $g_\lambda$ is $\beta$-Lipschitz in $x$ uniformly in $t$ and $\lambda$;
		\item [(iii)] for some $L^{\prime}>0$, the function $g_\lambda$ is $L^{\prime}$-Lipschitz in $\lambda$ uniformly in $t$ and $x$;
		\item [(iv)] there exists a continuous function $w:[0,1]\longrightarrow (0,\infty)$  such that
		$$ |g_{\lambda}(s,x(s))| \leq \frac{\tau}{2} w(s) |x(s)|, $$ for all $s\in [0,1]$ and all $\lambda \in \mathcal{V}(\overline{\lambda})$;
		\item [(v)] $\max_{s\in [0,1]}w(s)=e^{-\alpha}$, where $\alpha>e$.
	\end{enumerate}
	Then, for each $\lambda \in \mathcal{V}(\overline{\lambda})$, the problem \eqref{pbb} admits at least a solution and the following Lipschitz estimate holds: for a.e $t\in [0,1]$
	\begin{equation}\label{stab-RLC}
	\begin{split}
	\|x_{\lambda}(t)-x(t)\|\leq& \left(1+L\left(\frac{2}{2+L}\right)^{2}\left(e^{\frac{(2+L)t}{2}}-1-t\right)\right)\|x_{0,\lambda}\| \\
	&+\frac{2}{2+L}\left(e^{\frac{(2+L)t}{2}}-1\right)\|\dot{x}_{0,\lambda}\|+
	L^{\prime}\left(\frac{2}{2+L}\right)^{2}\left(e^{\frac{(2+L)t}{2}}-1-t\right)\|\lambda-\overline{\lambda}\|,
	\end{split}
	\end{equation}
	where $L=\sup(\beta,\tau)$.
\end{thm}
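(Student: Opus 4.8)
The plan is to recognize the perturbed RLC problem \eqref{pbb} as a concrete instance of the abstract parametric system \eqref{5}, so that the estimate \eqref{stab-RLC} follows by a direct application of Theorem \ref{main-th}, while the existence statement is handled separately through the Green's function formulation of \cite{I-R}. First I would perform the change of variable that casts the second order equation into the template \eqref{5}: writing $\tau=R/L$, the equation $\ddot{x}_\lambda+\tau\dot{x}_\lambda=g_\lambda(t,x_\lambda)$ is precisely $\ddot{x}_\lambda=f_\lambda(t,x_\lambda,\dot{x}_\lambda)$ with
\[ f_\lambda(t,x,y):=-\tau y+g_\lambda(t,x). \]
The remaining work then splits into verifying the three hypotheses of Theorem \ref{main-th} for this particular $f_\lambda$ and matching the constants that enter \eqref{estip}.

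Next I would check the hypotheses in turn. Continuity of $f_\lambda$ (hypothesis (1)) is immediate from (i) together with the linearity of the damping term $-\tau y$. For the Lipschitz condition (2), the key computation is
\[ \|f_\lambda(t,x,y)-f_\lambda(t,x',y')\|\le\tau\|y-y'\|+\|g_\lambda(t,x)-g_\lambda(t,x')\|\le\tau\|y-y'\|+\beta\|x-x'\|, \]
where (ii) is used at the last step; bounding both $\beta$ and $\tau$ by their supremum gives the $L$-Lipschitz estimate with $L=\sup(\beta,\tau)$, exactly the constant appearing in the statement. Hypothesis (3) coincides with (iii) once the damping term cancels in the difference $f_\lambda-f_{\lambda'}$, leaving $\|g_\lambda(t,x)-g_{\lambda'}(t,x)\|\le L'\|\lambda-\lambda'\|$. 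With all three hypotheses in force I would invoke Theorem \ref{main-th} on $[0,1]$, that is with $T=1$, and specialize to the reference initial data $x_0=\dot{x}_0=0$ fixed by the unperturbed problem \eqref{ex1}. Under this specialization $\|x_{0,\lambda}-x_0\|=\|x_{0,\lambda}\|$ and $\|\dot{x}_{0,\lambda}-\dot{x}_0\|=\|\dot{x}_{0,\lambda}\|$, so substituting $L=\sup(\beta,\tau)$ and $T=1$ into \eqref{estip} reproduces \eqref{stab-RLC} term by term.

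For the existence claim, which is logically independent of Theorem \ref{main-th}, I would follow the fixed-point argument behind \cite[Theorem 4.1]{I-R}: recast \eqref{pbb} as an integral equation built from the Green's function $G$ of \eqref{ex3}, adjusted by the homogeneous contribution carrying the nonzero initial data $x_{0,\lambda},\dot{x}_{0,\lambda}$, and then use the growth bound (iv) together with the normalization (v), namely $\max_{s}w(s)=e^{-\alpha}$ with $\alpha>e$, to verify that the associated integral operator maps a suitable ball of continuous functions into itself; a Schauder-type fixed point then yields at least one solution. I expect this existence step to be the only genuine obstacle, since the stability estimate is a mechanical substitution into Theorem \ref{main-th} once $f_\lambda$ and the constant $L=\sup(\beta,\tau)$ are identified. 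The delicate point is that (iv) couples the nonlinearity to $|x(s)|$ through the factor $\tfrac{\tau}{2}w(s)$, so the self-mapping property must be extracted from the interplay between the supremum of $|G|$ on $[0,1]$ and the bound $e^{-\alpha}$ on $w$; it is precisely the presence of nonzero initial data, rather than the homogeneous data of \eqref{ex1}, that forces the stated adjustment to the Green's function representation.
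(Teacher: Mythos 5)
Your proposal follows essentially the same route as the paper: the paper also sets $f_\lambda(t,x,y)=g_\lambda(t,x)-\tau y$, verifies the two Lipschitz hypotheses with $L=\sup(\beta,\tau)$ and $L'$, and obtains \eqref{stab-RLC} by substituting $T=1$ and $x_0=\dot{x}_0=0$ into \eqref{estip}, while the existence part is simply delegated to \cite[Theorem 4.1]{I-R} via assumptions (iv)--(v). Your only departure is that you sketch the fixed-point argument behind that citation (including the adjustment of the Green's function representation for the nonzero perturbed initial data, a point the paper passes over silently), which is a reasonable elaboration rather than a different method.
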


\begin{proof}
Under assumptions $(iv)$ and $(v)$, the existence of a solution to the problem (\ref{pbb}) follows from \cite[Theorem 4.1]{I-R}. Now, to prove the Lipschitz estimate \eqref{stab-RLC}, it suffices to apply Theorem \ref{main-th}. To this end, let us consider the function $f_{\lambda}:[0,1]\times \mathbb{R}^{+}\times \mathbb{R}^{+}\longrightarrow \mathbb{R}$ defined, for each $\lambda\in \mathcal{V}(\overline{\lambda})$, by
$$ f_{\lambda}(t,x,y)= g_{\lambda}(t,x)-\tau y.$$
Using hypothesis $(ii)$, we see that the function $f_{\lambda}$ is $L=\sup(\beta,\tau )$-Lipschitz in $(x,y)$ uniformly in $t$.
On the other hand, assumption $(iii)$ implies that $f_{\lambda}$ is $L^{\prime}$-Lipschitz in $\lambda$ uniformly in $(x,y)$. Consequently, the desired estimate \eqref{stab-RLC} immediately follows from inequality \eqref{estip} with $T=1$ and $x_0=\dot{x}_0=0$. 
%This completes the proof.
\end{proof}

%\newpage
\begin{rmk}
	Our approach is of course also applicable to an RLC series circuit of the following type:
	%%%%%%%%%%%%%%%
	\begin{flushleft}
		\begin{figure}[h]
			\vspace{-0.2cm}
			\begin{center}
				\includegraphics[width=5cm]{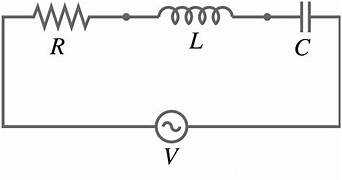}
				\vspace{-0.4cm}
				\caption{\small A series RLC circuit}
				\label{}
			\end{center}
		\end{figure}
	\end{flushleft}
	Indeed, since the charge on the capacitor in such circuit can be modelled by the following second order ODE:
	$$\frac{d^{2}q(t)}{dt^{2}}+\frac{R}{L}\frac{dq(t)}{dt}+\frac{1}{LC}q(t)=V(t),$$
	where, $L$ is the inductance, $R$ is the resistance, $C$ is the capacitance and $V(t)$ is the voltage source. \\
	We easily see that the above equation can be equivalently written as $$\ddot{q}(t)=f(t,q(t),\dot{q}(t)), $$
	where $f(t,x,y)=g(t,x)-\frac{R}{L}y$ and $g(t,x)=\frac{1}{L}V(t)-\frac{1}{LC}x$. \\
	Hence, with initial conditions $q(0)=\dot{q}(0)=0$, when the function $V$ is subject to perturbation, our result in Theorem \ref{thm-stab-RLC} can be applied whenever the perturbed form of the function $g$ satisfies all the assumptions of that Theorem.
\end{rmk}

\subsection{Linear control systems}
In this subsection, we give a further interesting application of our main result in Theorem \ref{main-th}.
%in the previous section
In this respect, for given 2$\times$2 matrices $A, B, C, D$, a nonnegative real number $\gamma$ and an initial condition $(x_{0},\dot{x}_{0})$, we aim to tackle linear control systems of second order ODE of the form:
%we consider linear control systems of the form:
\begin{equation}\label{LCS}
\tag{$LCS$}  \; \begin{cases}
\ddot{x}(t)=Ax(t)+\gamma \dot{x}(t)+ B u(t), \\
z(t)=C(x(t)+\dot{x}(t))+Du(t),  \\
x(0)=x_{0},\,\,\, \dot{x}(0)=\dot{x}_{0},\,\,\, z(0)= C(x_{0}+\dot{x}_{0})+Du(0),\\
\end{cases}
\end{equation}
where $x(.), z(.)$ and $u(.)$ are respectively the state variable,  the observability variable and the control of the system. In this example, the function $f$ is nothing else but
$$ f(t,x(t),y(t))=Ax(t)+\gamma y(t)+ B u(t).$$
We consider here a linear perturbation of the matrix $A$, i.e., $A=(a_{i j})_{i,j=1,2}$ and
$$ A_{\lambda}=\begin{pmatrix}
a_{11}+\lambda-\overline{\lambda} & a_{12} \\
a_{21} & a_{21}+\lambda-\overline{\lambda}
\end{pmatrix}, $$
$\overline{\lambda}$ being the initial value of $\lambda$.
Therefore, the perturbed linear control system under consideration is stated as follows:
%define the perturbed problem as follows:
%In this way, the perturbed linear control system we consider is as follows:
\begin{equation}\label{pert-lcs}
\tag{$LCS_{\lambda}$}  \;
\begin{cases}
\ddot{x}_{\lambda}(t)=A_{\lambda}x(t)+\gamma \dot{x}_{\lambda}(t)+ B_{\lambda} u(t), \\
z_{\lambda}(t)=C(x_{\lambda}(t)+\dot{x}_{\lambda}(t))+D_{\lambda}u(t),  \\
x_{\lambda}(0)=x_{0,\lambda},\,\,\, \dot{x}_{\lambda}(0)=\dot{x}_{0,\lambda},\,\,\, z_{\lambda}(0)= C(x_{0,\lambda}+\dot{x}_{0,\lambda})+D_{\lambda}u(0),\\
\end{cases}
\end{equation}
and the associated function $f_{\lambda}$ is given by
$$ f_{\lambda}(t,x(t),y(t))=A_{\lambda}x(t)+\gamma y(t)+B_{\lambda}u(t).$$

In the next, we will make use of the following assumptions:
\begin{itemize}
	\item[$(H_1)$] there exist $r>0$ and a neighborhood $\mathcal{V}(\overline{\lambda})$ of $\overline{\lambda}$ such that for all $\lambda \in \mathcal{V}(\overline{\lambda})$, $x,\, x_{\lambda} \in B(x_0,r)\cap B(x_{0,\lambda},r)$ and  $x^{\prime},\, x_{\lambda}^{\prime} \in B(\dot{x}_{0},r)\cap B(\dot{x}_{0,\lambda},r)$;
	\item[$(H_2)$]
	it is further assumed that on the same neighborhood $\mathcal{V}(\overline{\lambda})$ the norm $\|A_{\lambda}\|$ is independently bounded in $\lambda$, that is there exists $\alpha >0 $ such that $\|A_{\lambda}\|\leq \alpha$;
	\item[$(H_3)$] there exists a constant $\beta>0$ such that for all $\lambda,\lambda^{\prime}\in \mathcal{V}(\overline{\lambda})$ we have,
	$$\|B_{\lambda}-B_{\lambda^{\prime}}\|\leq \beta \| \lambda-\lambda^{\prime}\|\quad\text{and}\quad \|D_{\lambda}-D_{\lambda^{\prime}}\|\leq \beta \| \lambda-\lambda^{\prime}\|;$$
	\item[$(H_4)$] the control $u$ is such that $\|u\|_{\infty}\leq 1$.
\end{itemize}

In the Banach space $ C([0,T];H)$, we consider the norm $\|\,.\,\|_{\infty}$ given for a function $x$ by $\|x\|_{\infty}=\displaystyle \sup_{t\in[0,T]}\|x(t)\|$ and
state the following stability result for \eqref{pert-lcs}:
\begin{thm}\label{appcont}
	Under the assumptions above, the following estimation holds:
	\begin{equation}\label{z-lambda}
	\begin{split}
	\|z-z_{\lambda}\|_{\infty}\leq  c_1\|x_{0,\lambda}-x_{0}\|
	+c_2\|\dot{x}_{0,\lambda}-\dot{x}_{0}\|+c_3\|\lambda-\overline{\lambda}\|,
	\end{split}
	\end{equation}
	where, $$c_1=\|C\|\left(\frac{2+LT-2L}{2+LT}+\frac{2L(4+LT)}{(2+LT)^2}e^{\frac{(2+LT)T}{2}}\right),\;\; c_2=\frac{\|C\|(4+LT)}{2+LT}e^{\frac{(2+LT)T}{2}} $$  and $$c_3=\beta+\frac{4\|C\|L^{\prime}(1-L^{\prime})}{(2+LT)^2}e^{\frac{(2+LT)T}{2}}, \;\;\text{with}\;\; L=\sup(\alpha,\gamma) \;\; \text{and} \;\; L^{\prime}=r+\|x_0\|+\beta.$$
\end{thm}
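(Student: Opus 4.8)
The plan is to deduce the estimate directly from Theorem~\ref{main-th} applied to the state equation, together with the intermediate velocity bound \eqref{xlambda'} obtained inside its proof. First I would subtract the two observation equations to write
\[
z(t)-z_\lambda(t)=C\big((x(t)-x_\lambda(t))+(\dot{x}(t)-\dot{x}_\lambda(t))\big)+(D-D_\lambda)u(t),
\]
so that, by the triangle inequality and submultiplicativity of the operator norm,
\[
\|z(t)-z_\lambda(t)\|\le \|C\|\big(\|x(t)-x_\lambda(t)\|+\|\dot{x}(t)-\dot{x}_\lambda(t)\|\big)+\|D-D_\lambda\|\,\|u(t)\|.
\]
By $(H_3)$ and $(H_4)$ the last term is bounded by $\beta\|\lambda-\overline{\lambda}\|$, which contributes the summand $\beta$ appearing in $c_3$.

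Next I would check that the field $f_\lambda(t,x,y)=A_\lambda x+\gamma y+B_\lambda u$ fulfils the hypotheses of Theorem~\ref{main-th}. It is affine, hence continuous. Using $(H_2)$,
\[
\|f_\lambda(t,x,y)-f_\lambda(t,x',y')\|\le \|A_\lambda\|\,\|x-x'\|+\gamma\|y-y'\|\le L(\|x-x'\|+\|y-y'\|),\qquad L=\sup(\alpha,\gamma),
\]
so $f_\lambda$ is $L$-Lipschitz in $(x,y)$ uniformly in $t$ and $\lambda$. Since the definition of $A_\lambda$ gives $A_\lambda-A_{\lambda'}=(\lambda-\lambda')I$, we have
\[
\|f_\lambda(t,x,y)-f_{\lambda'}(t,x,y)\|\le \|A_\lambda-A_{\lambda'}\|\,\|x\|+\|B_\lambda-B_{\lambda'}\|\,\|u\|\le(\|x\|+\beta)\|\lambda-\lambda'\|,
\]
where I used $(H_3)$ and $(H_4)$; invoking $(H_1)$ to bound $\|x\|\le\|x_0\|+r$ then shows $f_\lambda$ is $L'$-Lipschitz in $\lambda$ with $L'=r+\|x_0\|+\beta$. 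Existence and uniqueness of $x$ and $x_\lambda$ on $[0,T]$ follow from the global Lipschitz character of this affine field via the Cauchy--Lipschitz theorem.

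With these constants, Theorem~\ref{main-th} yields \eqref{estip} for $\|x(t)-x_\lambda(t)\|$, while the intermediate inequality \eqref{xlambda'} controls $\|\dot{x}(t)-\dot{x}_\lambda(t)\|$; in both, $x_\lambda(0)-x(0)=x_{0,\lambda}-x_0$ and $\dot{x}_\lambda(0)-\dot{x}(0)=\dot{x}_{0,\lambda}-\dot{x}_0$. Substituting these two bounds into the inequality for $\|z(t)-z_\lambda(t)\|$ and grouping the coefficients of $\|x_{0,\lambda}-x_0\|$, $\|\dot{x}_{0,\lambda}-\dot{x}_0\|$ and $\|\lambda-\overline{\lambda}\|$ produces three time-dependent coefficients. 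For the first, for instance, one adds the position coefficient $1+L(2/(2+LT))^2(e^{(2+LT)t/2}-1-t)$ to the velocity coefficient $\tfrac{2L}{2+LT}(e^{(2+LT)t/2}-1)$ and multiplies by $\|C\|$; the other two are assembled analogously, the $\lambda$-coefficient being further augmented by the $\beta$ term above.

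Finally I would pass to the supremum over $t\in[0,T]$ defining $\|\cdot\|_\infty$. Each grouped coefficient is nonnegative and nondecreasing on $[0,T]$, since the exponential $e^{(2+LT)t/2}$ dominates the affine correction $-1-t$; hence the supremum is attained at $t=T$, and evaluating there (bounding the remaining lower-order constant terms from above) gives the closed forms $c_1,c_2,c_3$. The only genuine difficulty is this concluding bookkeeping: combining the position estimate \eqref{estip} with the velocity estimate \eqref{xlambda'}, simplifying the resulting rational functions of $L$ and $T$, and verifying the monotonicity in $t$ that legitimises replacing the time-dependent coefficients by their values at $t=T$.
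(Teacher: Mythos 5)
Your proposal follows essentially the same route as the paper: the same decomposition of $z-z_\lambda$ using $(H_3)$--$(H_4)$ to isolate the $\beta\|\lambda-\overline{\lambda}\|$ term, the same verification that $f_\lambda$ meets the hypotheses of Theorem \ref{main-th} with $L=\sup(\alpha,\gamma)$ and $L'=r+\|x_0\|+\beta$, and the same combination of \eqref{estip} with the velocity bound \eqref{xlambda'} before passing to the supremum over $[0,T]$. One caveat on the bookkeeping you deferred: carrying it out yields a $\lambda$-coefficient of $\beta+\|C\|\bigl(\tfrac{2L'(4+LT)}{(2+LT)^2}e^{(2+LT)T/2}-\tfrac{2L'}{2+LT}\bigr)$, which does not reduce to the stated $c_3$ containing the factor $(1-L')$, so the printed constant appears to be a misprint in the paper rather than something your (otherwise correct) assembly would reproduce.
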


\begin{proof}
%Then, estimation \eqref{z-lambda}
We have for all $\lambda \in \mathcal{V}(\overline{\lambda})$,
$$\|z-z_{\lambda}\|_{\infty}\leq \|C\|(\|x-x_{\lambda}\|_{\infty}+\|\dot{x}-\dot{x}_{\lambda}\|_{\infty})+\|D-D_{\lambda}\|\|u\|_{\infty}.$$
Then, by assumptions $(H_3)$ and $(H_4)$, we get
\begin{equation}\label{z-zlambda}
\|z-z_{\lambda}\|_{\infty}\leq \|C\|(\|x-x_{\lambda}\|_{\infty}+\|\dot{x}-\dot{x}_{\lambda}\|_{\infty})+\beta\|\lambda-\overline{\lambda}\|.
\end{equation}
Clearly, assumption $(H_1)$  implies that
$$\|A_{\lambda}x-A_{\lambda^{\prime}}x\|\leq (r+\|x_0\|)\|\lambda-\lambda^{\prime}\|.$$
Combining this inequality with hypotheses $(H_3)$ and $(H_4)$, we conclude that $f_\lambda$ is $L'=(r+\|x_0\|+\beta)$-Lipschitz in $\lambda$ uniformly in $t$ and $(x,y)$. \\
On the other hand, under assumption $(H_2)$, the function $f_\lambda$ is $L$-Lipschitz in $(x,y)$ uniformly in $t$ and $\lambda$ with $L=\sup(\alpha,\gamma)$. \\
Consequently, assumptions of Theorem \ref{main-th} are all satisfied. Applying this theorem, immediately follows from the estimate \eqref{estip}:
\begin{equation*}\label{}
\begin{split}
\|x-x_{\lambda}\|_{\infty}\leq&  \left(1+L\left(\frac{2}{2+LT}\right)^{2}e^{\frac{(2+LT)T}{2}}\right)\|x_{0,\lambda}-x_{0}\| \\
&+\frac{2}{2+LT}e^{\frac{(2+LT)T}{2}}\|\dot{x}_{0,\lambda}-\dot{x}_{0}\|+
L^{\prime}\left(\frac{2}{2+LT}\right)^{2}e^{\frac{(2+LT)T}{2}}\|\lambda-\overline{\lambda}\|.
\end{split}
\end{equation*}
Using thus inequality \eqref{xlambda'}, we also have
$$\|\dot{x}_{\lambda}-\dot{x}\|_{\infty}\leq  \|\dot{x}_{0,\lambda}-\dot{x}_{0}\|e^{\frac{(2+LT)T}{2}}+2\frac{L\|x_{0,\lambda}-x_{0}\|+L^{\prime}\|\lambda-\overline{\lambda}\|}{2+LT}( e^{\frac{(2+LT)T}{2}}-1).
$$
Combining the last two inequalities with \eqref{z-zlambda} yields the desired estimate \eqref{z-lambda}.
\end{proof}

%\begin{rmk}
%	In \cite{A}, based on Gronwall's techniques, the authors established a stability result of the observability variable in parametric linear control systems of first order in time. In our purpose, we rather investigate the parametric stability of the second order.  Accordingly, our result may be seen as an extension and refinement of that presented in \cite{A}. 
%\end{rmk}

\section{Numerical experiment}

In this section, we present numerical tests of our stability based on the  Scientific Laboratory, Scilab,  programming language, so we take back to the linear control system \eqref{LCS} considered in the previous section. The objective here is to justify the validity of Theorem \ref{appcont} when the parametric functions satisfy the conditions of that theorem. In this regard, for the sake of simplicity, we make the following standard choices:
\begin{center}
	$A=\begin{pmatrix}
	0 & -3 \\
	1 & -4
	\end{pmatrix}$ \quad
	and \quad $C=\begin{pmatrix}
	1&  1 \\
	1 & 1
	\end{pmatrix}$.	
\end{center}
We take the matrix $B=D=0$, the control $u(t)=\left(
\begin{array}{c}
1 \\
1 \\
\end{array}\right)$                                                  doesn't dependent on time and start from the following initial conditions $x_{0}=\left(
\begin{array}{c}
1 \\
1 \\
\end{array}
\right)$
and $\dot{x}_{0}=\left(
\begin{array}{c}
0 \\
1 \\
\end{array}
\right).$
%The functions $x(.)$, $z(.)$ are respectively the state variable and the observability variable of the system.
\\
\noindent
For the associated parametric form \eqref{pert-lcs} of our example,
we consider here a linear perturbation of the matrices $A,B$ and $D$ as follows:
$$A_{\lambda}=\begin{pmatrix}
\lambda & -3 \\
1 & -4+\lambda
\end{pmatrix}\quad \text{and} \quad  B_{\lambda}=D_{\lambda}=\begin{pmatrix}
\lambda & 0 \\
0 & \lambda
\end{pmatrix},
$$
$\overline{\lambda}=0$  being the initial value of $\lambda$, i.e.,  $x_{\overline{\lambda}}=x,\, A_{\overline{\lambda}}=A,\, B_{\overline{\lambda}}=B=0,\, C_{\overline{\lambda}}=C$\,and $D_{\overline{\lambda}}=D=0.$ \\
%Therefore, the perturbed linear control system under consideration is stated as follows:

%\begin{equation}\label{pert-lcs}
%\tag{$LCS_{\lambda}$}  \;
%\begin{cases}
% x_{\lambda}^{\prime\prime}(t)=A_{\lambda}x(t)+\gamma x_{\lambda}^{\prime}(t)+ B_{\lambda} u(t), \\
% z_{\lambda}(t)=C(x_{\lambda}(t)+x_{\lambda}^{\prime}(t))+D_{\lambda}u(t),  \\
% x_{\lambda}(0)=x_{0,\lambda},\,\,\, x_{\lambda}^{\prime}(0)=x_{0,\lambda}^{\prime},\,\,\, z_{\lambda}(0)= C(x_{0,\lambda}+x_{0,\lambda}^{\prime})+D_{\lambda}u(0),\\
%\end{cases}
%\end{equation}
%The objective of this section is to justify the validity of Theorem \ref{appcont} since the parametric functions satisfy the conditions of the cited theorem.

Figure 3 below describes the behavior of the norm of the difference between the observability solutions: $z_{\lambda}$ of the parametric system \eqref{pert-lcs} and $z$ of the initial one \eqref{LCS}, i.e., the solution for $\overline\lambda=0.$
%\medskip

%\begin{minipage}{0.5\textwidth}
%   \begin{figure}[H]
%      \includegraphics[scale=0.42]{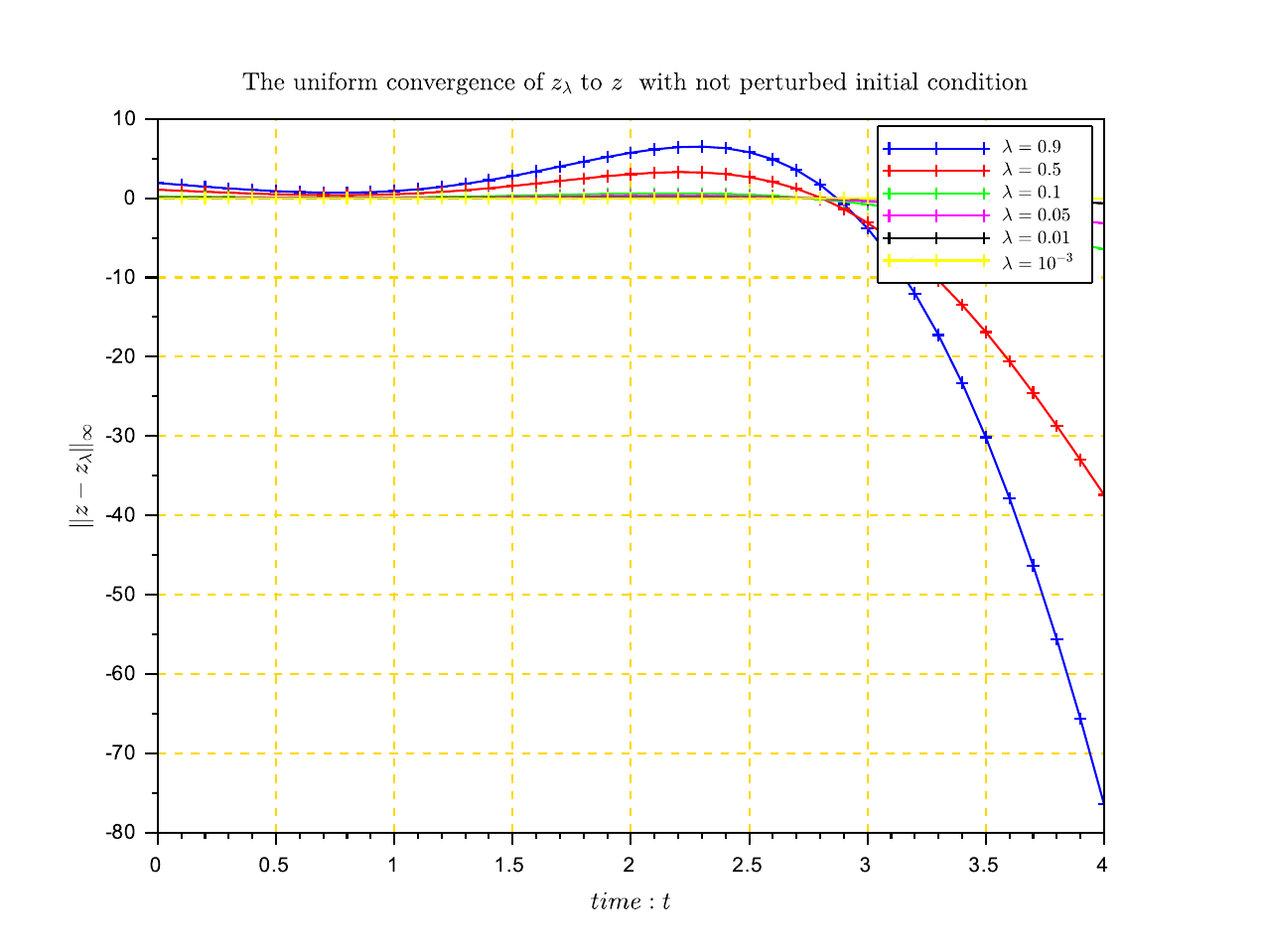}
%     \caption{\small The uniform convergence of $z_{\lambda}$ to $z$ \newline with not perturbed intial condition.}
% \end{figure}
%\end{minipage}
%\hspace{2ex} % eventuellement
%\begin{minipage}{0.5\textwidth}
%   \begin{figure}[H]
%      \includegraphics[scale=0.42]{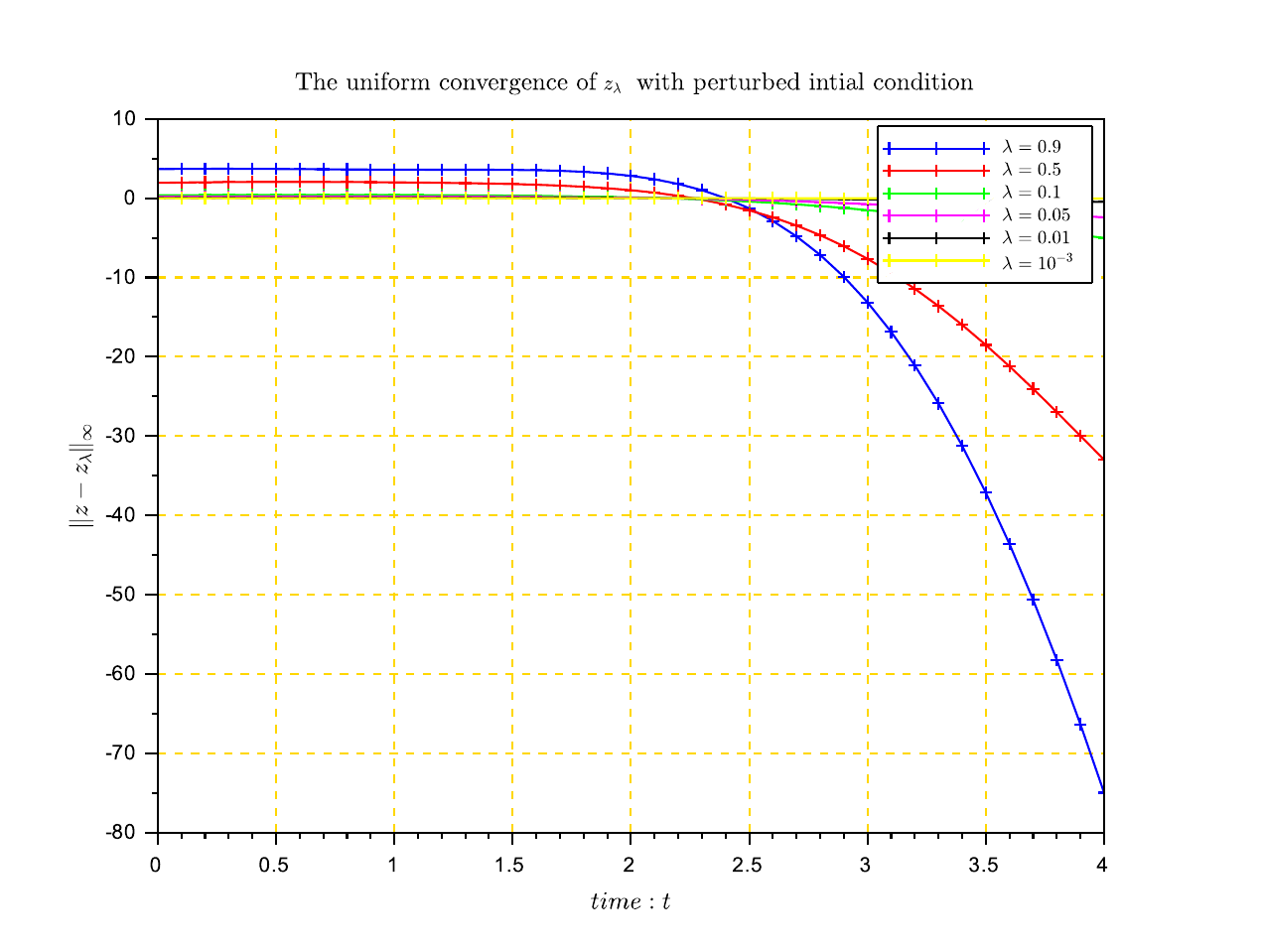}
%     \caption{\small The uniform convergence of $z_{\lambda}$ to $z$ \newline with perturbed intial condition.}
%\end{figure}
%\end{minipage}

\begin{flushleft}
	\begin{figure}[h]
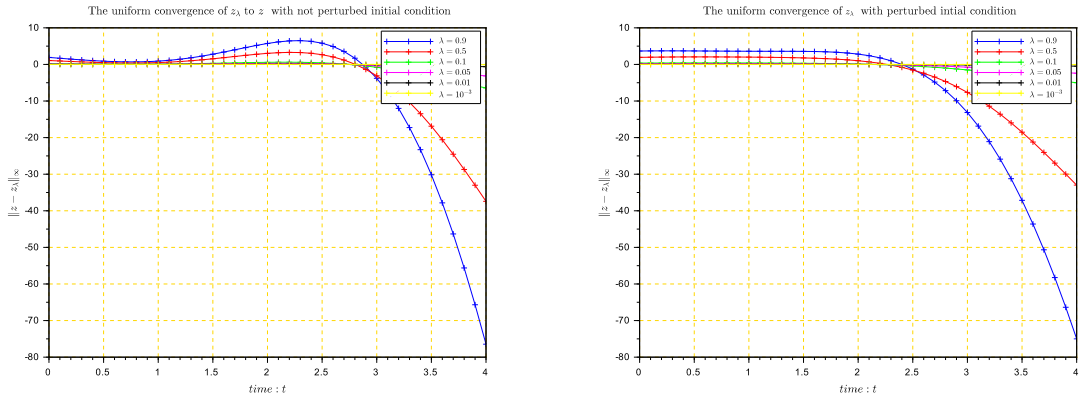

		%\subfloat[]
		{\includegraphics[scale=0.36]{Fig3.pdf}}
		%\subfloat[]
		{\includegraphics[scale=0.36]{Fig4.pdf}}
		% Un titre pour les deux figures
		\caption{\small The uniform convergence of $z_{\lambda}$ to $z$.}
	\end{figure}
\end{flushleft}
%\medskip
%\noindent

The numerical results in the left hand side of Figure 3 illustrate the rate of convergence of $\|z-z_{\lambda}\|_{\infty}$
%(the norm of the difference of observability solutions)
for various choices of $\lambda$, when the initial condition is not subject to perturbation, while the second panel displays the convergence rate of $\|z-z_{\lambda}\|_{\infty}$ for the perturbed system starting from perturbed initial condition $(x_{0,\lambda},\dot{x}_{\lambda,0})$ with:
$x_{0,\lambda}=\left(
\begin{array}{c}
1+\lambda \\
1+\lambda \\
\end{array}
\right)$
and $\dot{x}_{\lambda,0}=\dot{x}_{0}$ is keeping constant, by using the same values of the perturbation parameter $\lambda$.
Clearly, both plots indicate that the norm $\|z-z_{\lambda}\|_{\infty}$  decreases linearly with respect to the parameter $\lambda$ which confirm the validity of the inequality \eqref{z-lambda}. \\
%\begin{itemize}
%\item[$\bullet$]
%In Figure 3 above we check the validity of the inequality (\ref{z-lambda}) when the initial condition is not perturbed.
%This clearly indicates that the norm $\|z-z_{\lambda}\|_{\infty}$ (of the difference of the observability solutions) decreases linearly with respect to the parameter $\lambda$.
%\item [$\bullet$]
%In Figure 4, we consider the perturbation of the initial condition $x_0$ as follows:
%$x_{0,\lambda}=\left(
%                                  \begin{array}{c}
%                                   1+\lambda \\
%                                  1+\lambda \\
%                               \end{array}
%                            \right)$
%by keeping  $x^{\prime}_{\lambda,0}=x^{\prime}_{0}$ constant, as shown in the Figure 4,
%we can make the same observation. \\
%\end{itemize}

\noindent
{\bf Acknowledgements}.
The authors would like to thank Prof. M. Ait Mansour for the nice discussion on the topic and the valuable comments on the content of the paper.
The second author would like to thank his supervisor, Prof. J. Lahrache, for the continuous support during the preparation of his thesis. The authors wish equally to warmly thank the editor in chief of ASVAO Journal Prof. Christiane Tammer, and as well by the meantime, the guest editors Profs: Valentina Sessa, Elisabeth K\"{o}bis, Jaafar Lahrache and Mohamed Ait Mansour for their very nice cooperation during the workshops ASEM 23-El Jadida and ASSVAS 23-Safi to which is dedicated this issue.

%\newpage

\end{document}